\newcommand{\abs}[1]{\lvert#1\rvert}
\newcommand{\RR}{{\mathbb R}}
\newcommand{\CC}{{\mathbb C}}
\title[control and damping]{Control estimates for 0th order pseudodifferential operators}
\author{Hans Christianson}
\email{hans@math.unc.edu}
\author{Jian Wang}
\email{wangjian@email.unc.edu}
\address{Department of Mathematics, University of North Carolina, Chapel Hill, NC 27514}
\author{Ruoyu P. T. Wang}
\email{rptwang@math.northwestern.edu}
\address{Department of Mathematics, Northwestern University, Evanston, IL 60208}
\begin{document}

\begin{abstract}
    We introduce the control conditions for 0th order pseudodifferential operators $\mathbf{P}$ whose real parts satisfy the Morse--Smale dynamical condition. We obtain microlocal control estimates under the control conditions. As a result, we show that there are no singular profiles in the solution to the evolution equation $(i\partial_t-\mathbf{P})u=f$ when $\mathbf{P}$ has a damping term that satisfies the control condition and $f\in C^{\infty}$. This is motivated by the study of a microlocal model for the damped internal waves.
\end{abstract}

\maketitle

\section{Introduction}

Zeroth order pseudodifferential operators are used as mathematical models for internal waves since the work of Colin de Verdi\`ere--Saint-Raymond \cite{attractor}. When the operator is \emph{self-adjoint} and the Hamiltonian flow of its principal symbol satisfies the Morse--Smale condition, \cite{attractor} proved that the solution to the evolution equation has singular concentration on the attractive Lagrangians for the Hamiltonian flow. Such concentration provides a microlocal interpretation of the formation of attractors in internal waves -- see \cite{maas}, \cite{brouzet} for the physics literature. Using tools from scattering theory, in particular the radial estimates introduced by Melrose \cite{mel95}, Dyatlov--Zworski \cite{force} provided an alternative proof for the singular concentration of the solutions with relaxed conditions on the operators. Much work has been done on 0th order operators since then: Colin de Verdi\`ere \cite{co2} studied 0th order operators in high dimensions and with weaker dynamical assumptions; Wang \cite{scattering} studied the scattering matrix for 0th order operators, which is analogous to the scattering matrix for the Schr\"odinger operators in the Euclidean spaces;  Galkowski--Zworski \cite{vis} studied the resonances for 0th order operators via viscosity limits; Wang \cite{fermi} studied the dynamics of resonances for 0th order operators, in particular, a Fermi golden rule for embedded eigenvalues is proved. Tao \cite{circle} studied the spectral theory for 0th order operators on the circle and found embedded eigenvalues for such operators. Almonacid--Nigam \cite{numerics} obtained numerical results for both the evolution equations and viscosity limits of eigenvalues. The recent work of Dyatlov--Wang--Zworski \cite{aquarium} proved the formation of internal wave attractors in two-dimensional domains. We also mention that there are other motivation to study 0th order operators: for example, Ralston \cite{ralston} considered the Poincar\'e problem and investigated the spectrum and the eigenvalues of a 0th order operator, in the study of rotating fluids, 

Motivated by studying the damping of internal wave attractors \cite{fb, bbsm, bl}, we consider a microlocal model for the damped internal waves in this paper. More precisely, for a closed smooth surface $M$, we consider \emph{non-self-adjoint} 0th order pseudodifferential operators $\mathbf P = P-i\chi\in \Psi^0(M)$, where $\Psi^0(M)$ is the set of 0th order pseudodifferential operators whose symbols have asymptotic expansion with homogeneous symbols, $P\in\Psi^0(M)$ is self-adjoint and $\chi\in C^{\infty}(M)$, $\chi\geq 0$. We assume $P$ satisfies the Morse--Smale condition as in \cite{attractor, force}, hence it models the internal waves without damping. The smooth function $\chi$, which is also in $\Psi^0(M)$ as a multiplication operator, is now considered as the ``damping'' term. We are interested in the evolution problem for $\mathbf P$. In particular, we show that if $\chi$ satisfies certain ``control condition'', then the internal wave attractors disappear and the solution stays bounded in $L^2$: see Theorem \ref{thm: damp}. 

As indicated in the previous paragraph, the main tools we use to study the damping problem are resolvent estimates, in the form of microlocal control estimates for 0th order operators: see Theorem \ref{thm: control}. Since the singularity of solutions to $(\mathbf P-\omega)u=0$ propagates along the bicharacteristics of $\Re \sigma(\mathbf P)$ (where the direction of propagation is determined by the sign of $\Im\sigma(\mathbf P)$), one can control the solution from a subset $K$ of the cotangent bundle $T^*M\setminus 0$, assuming $T^*M\setminus 0$ can be covered by finitely many images of the subset $K$ propagated in finite time. When the Hamiltonian flow of $\Re\sigma(\mathbf P)$ satisfies the Morse--Smale condition, and we assume the subset $K$ is small and located on the attractive (or repulsive) limit cycles, the repulsive (or attractive, respectively) limit cycles cannot be covered in the above-mentioned way. In these cases, we will use radial estimates from scattering theory \cite{mel95} -- see \cite{hmv, kds, fw, quasi, hv, zeta} for some of the recent developments. We also mention that since the Morse--Smale condition is only stable under small perturbations, we can only obtain the microlocal control for a small range of the spectral parameters. We refer to \cite{lions,miller,bz04} for introductions to control theory and resolvent estimates. 

Resolvent estimates often imply stability results for damped equations, especially the damped wave equation. Classical results \cite{rt74,blr92,bg97} in the damped wave equation, together with recent developments \cite{bj16,wan20,bg20,kk22,kle22b} in the settings of non-compact manifolds, singular, anisotropic, and time-dependent damping, implies that there is exponential energy decay without loss of derivatives if and only if the \emph{geometric control condition} is satisfied: the geometric control condition is a strong dynamical assumption that every geodesic enters the damped region in finite time. Moreover, the solution will be sufficiently smooth if both the damping and the initial data are sufficiently smooth. Even if there are, for example, periodic geodesics or other attracting or repulsing invariant sets which are outside the control regions, solutions are still smooth near the attractors, as in the vast control theory and damped wave literature \cite{CdVP-I,Chr-1,Chr-NC-erratum,aln14,CSVW14,bc15,ll17,wun17,sta17,kle19b,dk20,wan21,wan21b,kle22,sun22,kw22}, inexhaustively listed here.

In stark contrast, in this paper we study solutions to evolution equations of zeroth order, for which solutions near attractors exhibit some blow-up.  The goal of this work is to show if damping is introduced near the attractors, it stabilizes the solutions. Note that in this case, the control conditions we impose are weaker than the geometric control condition.

This problem is new and requires new techniques.  For the usual damped wave problem, the coercive nature of $-\Delta$ gives rise to a compact resolvent. Using the analytic Fredholm theory, this implies the damped wave resolvent has a meromorphic extension to a neighbourhood of the real axis.  In particular, the poles of the damped wave resolvent are of finite order and discrete, hence a contour deformation in the inverse Fourier transform allows one to conclude the exponential decay of energy.

On the other hand, for the zeroth order pseudodifferential operators studied in this paper, the resolvent is not compact in the $L^2$ space, so the analytic Fredholm theory does not apply.  In particular, a limiting absorption principle is non-trivial.  In fact, as one can see in the second remark after Theorem \ref{thm: damp}, the unique continuation principle can fail for zeroth order operators even with a spatial damping term.

\subsection{Assumptions}
\label{subsec: assumptions}
Let $M$ be a compact surface without boundary. Let $\mathbf P = P+iQ\in \Psi^0(M)$, where $P$, $Q$ are self-adjoint operators with respect to some density $dm$ on $M$: 
\begin{equation}
    P, Q\in \Psi^0(M), \ P^*=P, \ Q^*=Q \ \text{on} \ L^2(M, dm).
\end{equation}
Here $\Psi^0(M)$ is the space of 0th order polyhomogeneous pseudodifferential operators defined in \cite[Definition E.12]{dz19} with $h=1$. The operators $P$, $Q$ are called the real and the imaginary parts of $\mathbf P$ respectively, and we write
\[ \Re \mathbf P:= \frac{\mathbf P+\mathbf P^*}{2}=P, \ \Im \mathbf P:=\frac{\mathbf P- \mathbf P^*}{2i}=Q. \]
Let $\beta$ be defined as 
\[ \beta: \RR_+\times (T^*M\setminus 0)\to T^*M\setminus 0, \ \beta(r, x,\xi):=(x,r\xi). \]
$\beta$ gives an action of $\RR_+$ on $T^*M\setminus 0$. We denote the quotient space and the quotient map by 
\[ \partial \overline{T^*M}:= (T^*M\setminus 0)/\RR_+, \ \kappa: T^*M\setminus 0 \to \partial\overline{T^*M}. \]
$\partial\overline{T^*M}$ is a 3-dimensional orientable smooth manifold.

Let $p$, $q\in S^0(T^*M\setminus 0; \RR)$ be the principal symbols of $P$, $Q$. Then $p, q$ are homogeneous of order $0$. We assume that 
\begin{equation}
    \label{ap: rv}
    0 \text{ is a regular value of } p.
\end{equation}
In other words, $dp|_{p^{-1}(0)}\neq 0.$ This implies that the characteristic variety $p^{-1}(0)$ is a smooth conic submanifold of $T^*M\setminus 0$. Let $H_p$ be the Hamiltonian vector field 
\[ H_p:=\langle \partial_{\xi} p, \partial_x \rangle - \langle \partial_x p, \partial_{\xi} \rangle. \] 
Then $|\xi|H_p$ is a smooth vector field on $p^{-1}(0)$ that commutes with $\beta$ and is homogeneous of order $0$. We now define 
\[ \mathcal S:=\kappa(p^{-1}(0)), \ \mathrm{X}:= \kappa_*(|\xi|H_{p}). \]
$\mathrm{X}$ is a smooth vector field on $\mathcal S$. 
We assume that 
\begin{equation}
    \label{ap: msflow}
    \varphi_t:=e^{t\mathrm{X}} \text{ is a Morse--Smale flow with no fixed points on } \mathcal S.
\end{equation}
Recall that \eqref{ap: msflow} means that (see for instance \cite[Definition 5.1.1]{2dflow})
\begin{enumerate}[(1)]
    \item $\varphi_t$ has finitely many closed hyperbolic limit cycles;
    \item every trajectory of $\varphi_t$ that is not a limit cycle has unique limit cycles as its $\alpha, \omega$-limit sets.
\end{enumerate}
We denote the repulsive and attractive limit cycles by $\gamma_-$, $\gamma_+$ respectively and $\Lambda_{\pm}:=\kappa^{-1}(\gamma_{\pm})$. Then $\Lambda_{\pm}$ are conic Lagrangian submanifolds of $T^*M\setminus 0$. \cite[Lemma 2.1]{force} showed that $\gamma_{\pm}$ are the radial source $(-)$ and the radial sink $(+)$ respectively, for the Hamiltonian flow of $\abs{\xi}p$, in the sense of \cite[Definition E.50]{dz19}.

\Remark
\noindent
Since regular values and the Morse--Smale property are stable under small perturbations to the spectral parameter, if \eqref{ap: rv}, \eqref{ap: msflow} are satisfied, then there exists $\delta>0$ such that for every $\omega \in [-\delta, \delta]$, $\omega$ is a regular value of $p$ and \eqref{ap: msflow} is satisfied by 
\[ \mathcal S(\omega):=\kappa(p^{-1}(\omega)), \ \mathrm{X}(\omega):=\kappa_*(|\xi|H_p). \]
Moreover, let $\gamma_{\pm}(\omega)$ be the limit cycles, $\Lambda_{\pm}(\omega):=\kappa^{-1}(\gamma_{\pm}(\omega))$, then $\gamma_{\pm}(\omega)$ are the radial source ($-$) and the radial sink ($+$) for the Hamiltonian flow of $|\xi|(p-\omega)$. Moreover, if $K\subset T^*M\setminus 0$ satisfies (CC$\pm$) for $\mathbf P$, then it also satisfies (CC$\pm$) for $\mathbf P-\omega$, $\omega\in [-\delta,\delta]$. For $\omega\in \CC$, $\Re\omega\in [-\delta,\delta]$, $\Lambda_{\pm}(\omega)$ is understood as $\Lambda_{\pm}(\Re\omega)$.

We also assume 
\begin{equation}
    \label{ap: direction}
    q\leq 0 \text{ on } T^*M\setminus 0.
\end{equation}
In view of propagation of singularities \cite[Theorem E.47]{dz19}, \eqref{ap: direction} implies that the singularities propagate backwards along the Hamiltonian flow.
Let $\alpha\in C^{\infty}(T^*M;[0,1])$ such that $\supp \alpha\subset \{|\xi|\geq 1\}$, $\supp(1-\alpha)\subset \{|\xi|\leq 2\}$. Let $q_{-1}$ be the principal symbol of $Q-\mathrm{Op}(\alpha q)\in \Psi^{-1}(M)$:
\begin{equation}
    q_{-1}:=\sigma(Q-\Op(\alpha q)).
\end{equation}
Then $q_{-1}$ is homogeneous of order $-1$.

We now introduce the \emph{control conditions}:
\begin{defi}
    \label{def: gcc}
    For a conic set $K\subset T^*M\setminus 0$, we say that $K$ satisfies the forward (backward) control condition (CC$\pm$, $+$ for forward, $-$ for backward), if there exists a conic open subset $\Omega\subset T^*M\setminus 0$, such that $\Omega\subset K$ and every connected component of $\Lambda_+$ (or $\Lambda_-$ respectively) intersects $\Omega$. 
\end{defi}
\begin{figure}[t]
    \centering
    \subfigure[CC$+$]{\label{f1a}
    \includegraphics[scale=1, page=1]{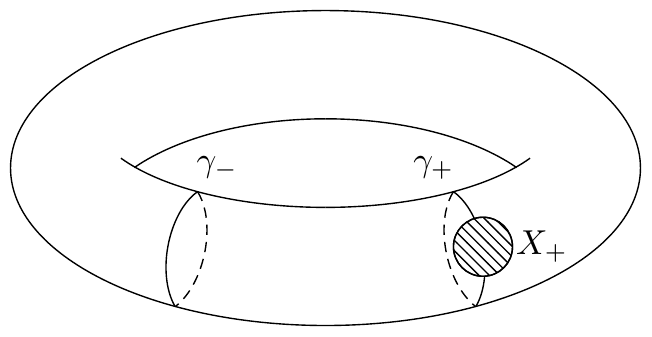}}
    \subfigure[CC$-$]{\label{f1b}
    \includegraphics[scale=1, page=2]{f1.pdf}}
    \caption{Illustration of the control conditions. $\gamma_+$ is the attractive limit cycle and $\gamma_-$ is the repulsive limit cycle, see \S \ref{subsec: assumptions}. The labels $X_{\pm}$ represent the radial compactification of the elliptic sets of $X_{\pm}$.}
    \label{fig: control}
\end{figure}

See Figures \ref{f1a} and \ref{f1b} for illustrations of the control conditions. 

\subsection{Main results}
\label{subsec: main}
Assume there is a conic subset of $T^*M\setminus 0$ that satisfies the control condition (CC$\pm$). In the corresponding Sobolev spaces, we can show the solution $u$ to the equation 
\begin{equation}
    (\mathbf P -\omega) u  = f\in C^{\infty}(M)
\end{equation}
can be microlocally controlled by its own part on this conic set, uniformly for $\omega$ close to $0$. More precisely:
\begin{theo}
    \label{thm: control}
    Suppose $\mathbf P\in \Psi^0(M)$ satisfies the conditions \eqref{ap: rv}, \eqref{ap: msflow}, \eqref{ap: direction} 
    and $q_{-1}$, $H_p$, $\Lambda_{\pm}(\omega)$ are as in \S \ref{subsec: assumptions}. Let (CC$\pm$) be as in Definition \ref{def: gcc}. Then the following are true:
    \begin{enumerate}[1.]
        \item Suppose $X_+\in \Psi^0(M)$ and $\Ell(X_+)$ satisfies (CC+). Then there exists $\delta>0$ such that for any 
            \begin{equation}
                \label{eq: hi-reg}
                |\Re \omega|\leq \delta, \ \Im\omega\geq 0, \ s>\sup_{\Lambda_-(\omega)}\left( -\tfrac{|\xi|q_{-1}}{H_p|\xi|} \right)-\tfrac12, \ N\in \RR,
            \end{equation}
            there exists $C>0$ independent of $\omega$ such that for any $u\in C^{\infty}(M)$,
            \begin{equation}
                \label{eq: sink-control}
                \|u\|_{H^s}\leq C \|X_+u\|_{H^s} + C\|(\mathbf P-\omega) u\|_{H^{s+1}} + C \|u\|_{H^{-N}}.
            \end{equation}

        \item Suppose $X_-\in \Psi^0(M)$ and $\Ell(X_-)$ satisfies (CC-). Then there exists $\delta>0$ such that for any
            \begin{equation}
                \label{eq: lo-reg}
                |\Re\omega|\leq \delta, \ \Im\omega\geq 0, \ s<\inf_{\Lambda_+(\omega)}\left( -\tfrac{|\xi|q_{-1}}{H_p|\xi|} \right)-\tfrac12, \ N\in \RR,
            \end{equation}
            there exists $C>0$ independent of $\omega$ such that for any $u\in C^{\infty}(M)$, 
            \begin{equation}
                \label{eq: source-control}
                \|u\|_{H^s}\leq C \|X_-u\|_{H^s}+C\|(\mathbf P-\omega) u\|_{H^{s+1}} + C\|u\|_{H^{-N}}.
            \end{equation}
    \end{enumerate}
\end{theo}

\Remarks
1. The regularity thresholds in \eqref{eq: hi-reg}, \eqref{eq: lo-reg} arise naturally from the radial estimates, see \cite[Theorem E.52, Theorem E.54]{dz19} for instance. When $\mathbf P$ takes the specific form \eqref{eq: dampp}, we have $q_{-1}=0$ and the thresholds in \eqref{eq: hi-reg}, \eqref{eq: lo-reg} are simply $s>-\frac12$, $s<-\frac12$ respectively.

\noindent 
2. Constants in the proofs of Theorem \ref{thm: control} -- including constants in the proofs of Lemma \ref{lem: source_cover} and Lemma \ref{lem: sink_cover} -- are all independent of $\omega$.

As an application of Theorem \ref{thm: control}, we study a microlocal model for the damped internal wave and show the elimination of the singular profile. More precisely, we put
\begin{equation} 
    \label{eq: dampp}
    \mathbf P=P-i\chi, \ \chi\in C^{\infty}(M;\RR_{\geq 0}),
\end{equation}
and we are interested in the long time behavior
of the solution to the equation 
\begin{equation}
    \label{eq: damp}
    (i\partial_t -\mathbf P)u(t,x)=f(x), \ u(0,x)=0.
\end{equation}
If $\chi\equiv 0$ and $0\notin \mathrm{Spec}_{\mathrm{pp}}(P)$, then 
by the main theorem of \cite{force} 
\begin{equation}\begin{gathered}\label{singular}
    u(t)=u_{\infty}+b(t)+\epsilon(t), \\
    u_{\infty}\in I^0(M;\Lambda_+(0)), \ \|b(t)\|_{L^2}\leq C, \ \|\epsilon(t)\|_{H^{-1/2-}}\to 0.
\end{gathered}\end{equation}
Thus the solution has a singular profile $u_{\infty}$, concentrating on the attracting Lagrangian submanifolds. Figure \ref{fig: undamped} exhibits the concentration.
\begin{figure}[t]
    \centering
    \subfigure[Undamped wave]{
        \label{fig: undamped}
    \includegraphics[scale=0.19]{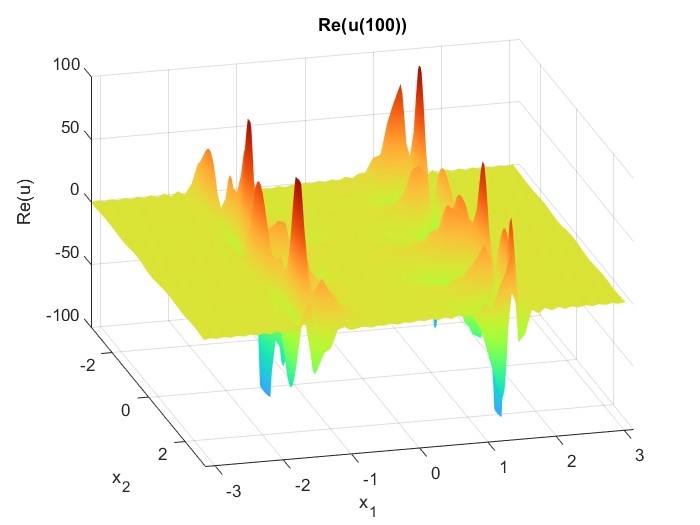}}
    \subfigure[Partially damped wave]{
        \label{fig: part-damped}
        \includegraphics[scale=0.19]{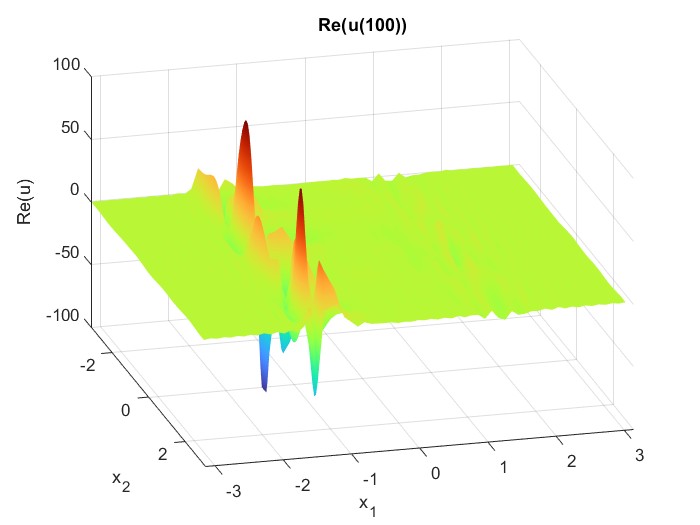}
    }
    \subfigure[Damped wave]{
        \label{fig: damped}
    \includegraphics[scale=0.19]{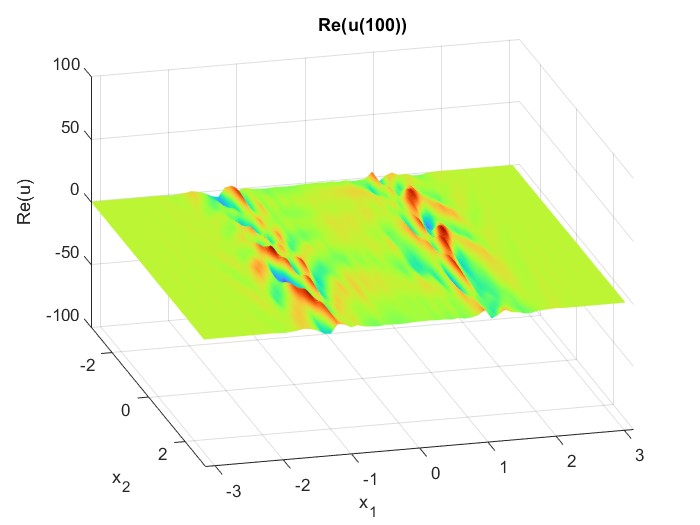}}
    \caption{Numerical illustration of solutions to \eqref{eq: damp} with $P$, $f$ given in \S \ref{subsec: examples} and $\chi = \chi_{j}$, $j=0,1,2$. (a) $\chi_0=0$. The undamped wave concentrates at the limit cycles and the $L^2$ norm of the solution blows up. (b) $\chi_1(x)=\tfrac12 e^{-5(x_1-\pi/2)^2}$. $\chi_1$ is numerically localized near $x_1=\frac{\pi}{2}$ and negligible at $x_1=-\frac{\pi}{2}$. As a result, the singular profile at $x_1=\frac{\pi}{2}$ disappears but there is still a singular profile at $x_1=-\frac{\pi}{2}$.  (c) $\chi_2(x_1,x_2)=\chi_1(x_1,x_2)+\chi_1(x_1+\pi, x_2)$. The damping is localized on both limit cycles and the solution stays bounded in $L^2$.}
    \label{fig: damp}
\end{figure}

The following theorem shows that if $0\notin \mathrm{Spec}_{\mathrm{pp}}(\mathbf P)$ and $\chi$ is nonzero on the projection of every connected component of the attracting Lagrangian to the base manifold, then the solution to \eqref{eq: damp} does not have singular profile: see Figure \ref{fig: damped}.

\begin{theo}
    \label{thm: damp}
    Let $\mathbf P$ be as in \eqref{eq: dampp} and satisfy \eqref{ap: rv}, \eqref{ap: msflow}, 
    and $0\notin \mathrm{Spec}_{\mathrm{pp}}(\mathbf P)$.
    Suppose $T^*(\supp \chi)\setminus 0$ satisfies (CC+) and $u$ solves \eqref{eq: damp}. Then there exists $C>0$ that does not depend on $t$ such that 
    \begin{equation}
        \|u(t)\|_{L^2(M)}\leq C, \ t>0.
    \end{equation}
\end{theo}

\Remarks 
\noindent
1. The assumption $0\notin \mathrm{Spec}_{\mathrm{pp}}(\mathbf P)$ is equivalent (see the proof of Lemma \ref{lem: finite-eig}) to the following form of the \emph{unique continuation principle} for $P=\Re(\mathbf P)$:
\begin{equation}
    \label{eq: unq-cont}
    Pu=0, \ u|_{\supp\chi}=0 \ \Rightarrow \ u\equiv 0.
\end{equation} 
In the case where $M=\mathbb T^2:=\RR^2/(2\pi \mathbb Z^2)$ and the full symbol of $P$ admits bounded analytic continuation $p(z,\zeta)$ from $T^*\mathbb T^2$ to 
\[ \left\{ (z,\zeta)\in \CC^2/(2\pi \mathbb Z^2)| \ |\Im z| \leq a_1, \ |\Im \zeta|\leq a_2 \langle \Re \zeta \rangle \right\}, \]
\cite[Proposition 3.1]{fermi} proved the analyticity of the eigenfunctions $u$ of $P$. Hence \eqref{eq: unq-cont} is satisfied in this case.

\noindent
2. For a general $\mathbf P$, \eqref{eq: unq-cont} can fail. In fact, let $\mathbf P= P-i\chi\in \Psi^0(M)$, $\chi\in C^{\infty}(M)$ such that $\mathbf P$ satisfies conditions in \S \ref{subsec: assumptions} and $M\setminus \supp \chi\neq \emptyset$. Then for any $v\in C^{\infty}(M;\RR)$, $\supp\chi\cap \supp v=\emptyset$, let $\Pi_{Pv}$ be the orthogonal projection on to $\CC Pv$. Put 
\begin{equation}
    \widetilde{\mathbf P}:=(P-\Pi_{Pv}P)- i\chi,
\end{equation}
Since $\Pi_{Pv}P \in \Psi^{-\infty}(M)$, we know $\widetilde{\mathbf P}$ satisfies conditions in \S \ref{subsec: assumptions} yet $\widetilde{\mathbf P}v=0$.

\noindent
3. One can also consider the case where some of the attracting Lagrangians are damped while the others are not. In this case, if $\Gamma_+$ is a connected component of $\Lambda_+(0)$ and $T^*(\supp\chi)\setminus 0$ contains a conic open subset that intersects $\Gamma$, then $u$ is bounded near $\Gamma_+$. If $\Gamma_+\cap T^*(\supp\chi)\setminus 0=\emptyset$ instead, then $u$ can have singular profiles near $\Gamma_+$ -- see Figure \ref{fig: part-damped}.

\subsection{Examples}
\label{subsec: examples}

\noindent
1. Consider $M=\mathbb T^2$, 
\begin{equation}\begin{gathered}
    \mathbf P=P-i\chi\in \Psi^0(\mathbb T^2), \ P:=\langle D \rangle^{-1}D_{x_2}-a\cos{x_1},\\ 
    a>0, \ a\neq 1, \ \chi\in C^{\infty}(\mathbb T^2), \ \chi\geq 0.
\end{gathered}\end{equation}
$P$ has the principal symbol $p(x,\xi)=|\xi|^{-1}\xi_2-a\cos{x_1}$.
The radially compactified characteristic variety $\mathcal S$ is a disjoint union of two tori. Let $\Pi_x: T^*\mathbb T^2\rightarrow \mathbb T^2$ be the projection of $(x,\xi)$ to $x$. If $0<a<1$, then $\mathcal S$ covers $\mathbb T^2$, meaning that $\Pi_x: \mathcal S\to \mathbb T^2$ is onto; if $a>1$, then $\mathcal S$ does not cover $\mathbb T^2$.
On $p^{-1}(0)\subset T^*\mathbb T^2\setminus 0$, we have
\[ H_{|\xi|p}=|\xi|H_p=-\frac{\xi_1\xi_2}{|\xi|^2}\partial_{x_1}+\frac{\xi_1^2}{|\xi|^2}\partial_{x_2}-a(\sin{x_1})|\xi|\partial_{\xi_1}. \]
We now introduce the coordinates $(\rho,\theta)\in \RR_+\times \mathbb S^1$ on $T^*\mathbb{T}^2\setminus 0$ such that $\xi_1=\rho\cos{\theta}$, $\xi_2=\rho\sin{\theta}$ and identify $\partial \overline{T^*\mathbb T^2}$ with the cosphere bundle $S^*\mathbb T^2$. Then 
\[ \mathcal S = \{ (x_1,x_2,1,\theta) | \ \sin{\theta} = a\cos{x_1}, \ x\in \mathbb T^2, \ \theta\in \mathbb S^1 \}, \ \mathrm{X} = -\sin{\theta}\cos{\theta}\partial_{x_1}+\cos^2{\theta}\partial_{x_2}+a\sin{x_1}\sin{\theta}\partial_{\theta}. \]
Now let 
\[\begin{gathered} 
    \gamma_-:=\left\{ (\tfrac{\pi}{2},x_2,1,0)| \ x_2\in \mathbb S^1 \right\}\cup \left\{ (-\tfrac{\pi}{2},x_2,1,\pi)| \ x_2\in \mathbb S^1 \right\}\subset \partial\overline{T^*\mathbb T^2}, \\
    \gamma_+:=\left\{ (\tfrac{\pi}{2}, x_2, 1,\pi)| \ x_2\in \mathbb S^1 \right\} \cup \left\{ (-\tfrac{\pi}{2},x_2,1,0)| \ x_2\in \mathbb S^1 \right\} \subset \partial\overline{T^*\mathbb T^2}.
\end{gathered}\]
Then we have $\mathrm{X}|_{\gamma_{\pm}} = \partial_{x_2}$. One can see now that $\gamma_{\pm}$ are closed orbits of $e^{t\mathrm{X}}$. On $\mathcal S$, near $\gamma_{\pm}$, the coefficient of $\partial_{\theta}$ is $\pm \sqrt{a^2-\sin^2{\theta}}\sin{\theta}$ where the signs $\pm$ are determined by the value of $x_1$. From here we see that $\gamma_-$ are repulsive cycles and $\gamma_+$ are attractive cycles. Now by definitions of $\Lambda_{\pm}(0)$, we know $H_{|\xi|p}$ has the attractive and the repulsive Lagrangian submanifolds
\[\begin{gathered} 
    \Lambda_-(0)=\kappa^{-1}(\gamma_-)=\left\{ (\pm \tfrac{\pi}{2}, x_2, \xi_1,0) | \ x_2\in \mathbb S^1, \ \pm \xi_1>0 \right\},\\ 
    \Lambda_+(0)=\kappa^{-1}(\gamma_+)=\left\{ (\pm \tfrac{\pi}{2}, x_2, \xi_1,0) | \ x_2\in \mathbb S^1, \ \pm \xi_1<0 \right\}. 
\end{gathered}\]
In Figure \ref{fig: damp}, we studied the numerical solution to \eqref{eq: damp} with 
\[ a=\tfrac12, \ f(x)= -5(e^{ -3((x_1+0.9)^2+(x_2+0.8)^2)}+e^{ -3((x_1-0.9)^2+(x_2-0.8)^2)})e^{2ix_1+ix_2} \]
and 
\[ \chi_0\equiv 0, \ \chi_1(x)=\tfrac12 e^{-5(x_1-\pi/2)^2}, \chi_2(x)=\tfrac12 e^{-5(x_1-\pi/2)^2}+\tfrac12 e^{-5(x_1+\pi/2)^2} \]
for Figure \ref{fig: undamped}--\ref{fig: damped} respectively. We use \texttt{MATLAB} to produce the numerical solutions and we refer to \cite{numerics} for the numerical schemes used here.

\noindent
2. In \eqref{eq: damp}, if $\mathbf P=P-i$, $f\in C^{\infty}(M)$, then from the spectral theory we know
\[ u(t)=(P-i)^{-1}(e^{-t-itP}-I)f(x). \]
Since $e^{-itP}$ preserves the $L^2$ norm and $P$ is of 0th order, we know $u\in C^{\infty}([0,\infty);L^2(M))$ and 
\[ \|u(t)\|_{L^2}\leq (1+e^{-t})\|f\|_{L^2}. \]
Moreover we have 
\[ \lim_{t\to \infty}u(t) = -\mathbf P^{-1}f(x) \text{ in } L^2(M). \]
If, however, the damping term $-i$ is removed in this example and $\mathbf P = P$, then one can only claim that $u(t)\in H^{-\frac12-}(M)$ by \eqref{singular}.

\noindent
{\bf Acknowledgement.}
The authors are thankful to Jared Wunsch and Maciej Zworski for their insightful comments on the failure of unique continuation for general zeroth order operators. The authors are grateful to two anonymous referees for kindly reading this manuscript and providing many valuable remarks. RPTW is partially supported by NSF grant DMS-2054424.

\section{Microlocal control estimates}
In this section, we prove Theorem \ref{thm: control}. We always assume $\omega=\lambda+i\epsilon$ with $\lambda\in [-\delta,\delta]$ where $\delta$ satisfies the conditions in Remark in \S \ref{subsec: assumptions} and $\epsilon>0$. We remark that results in this section are uniform in $\omega$ by the stability of the Morse--Smale flow discussed in Remark in \S \ref{subsec: assumptions}.

Let $\psi_t:= e^{tH_{|\xi|(p-\lambda)}}: T^*M\setminus 0\to T^*M\setminus 0$ be the Hamiltonian flow of $|\xi|(p-\lambda)$ on $T^*M\setminus 0$. 

To state the following lemmata for covering $T^*M\setminus 0$ by the images of elliptic sets of operators under propagation, we recall that for a pseudodifferential operator $A\in \Psi^k(M)$, its elliptic set $\Ell(A)$ is defined as a conic subset of $T^*M\setminus 0$, such that $(x_0,\xi_0)\in \Ell(A)$ if and only if there exists a conic neighborhood $U$ of $(x_0,\xi_0)$ in $T^*M\setminus 0$, such that 
\[ |\sigma(A)(x,\xi)|\geq C|\xi|^k, \ (x,\xi)\in U, \]
where $\sigma(A)$ is the principal symbol of $A$. 

\begin{figure}[t]
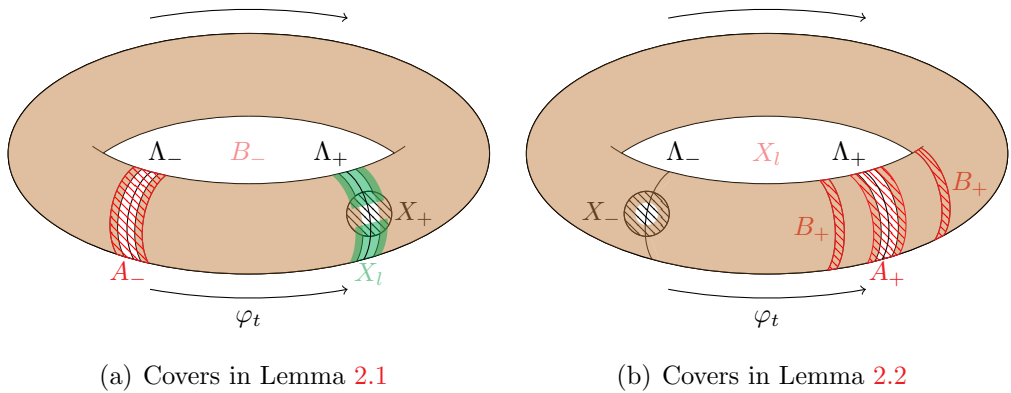

    \centering
    \subfigure[Covers in Lemma \ref{lem: sink_cover}]{
    \includegraphics[scale=1,page=3]{f1.pdf}}
    \subfigure[Covers in Lemma \ref{lem: source_cover}]{
    \includegraphics[scale=1,page=4]{f1.pdf}}
    \caption{Illustration of the covers constructed in Lemma \ref{lem: sink_cover} and Lemma \ref{lem: source_cover}. The labels should be understood as the radial compactifications of the elliptic sets of the corresponding operators.}
    \label{fig: cover}
\end{figure}

We then have
\begin{lemm}
    \label{lem: sink_cover}
    Suppose $A_-$, $X_+\in \Psi^0(M)$ such that
    \[ \Lambda_-\subset \Ell(A_-), \ \Ell(X_+) \text{ satisfies (CC$+$). } \]
    Then there exist finitely many operators $\{B_{\ell}, X_{m}\}_{\ell,m}\subset \Psi^0(M)$, such that the elliptic sets of $A_-, B_{\ell}, X_+, X_{m}$ cover  $p^{-1}(\lambda)\subset T^*M\setminus 0$
    and for each $\ell$, $m$, there exist $0<T_{\ell}, T^{\prime}_{m}<\infty$, such that
    \begin{equation}
        \label{eq: prin-prop}
        \WF(B_{\ell})\subset \psi_{T_{\ell}}(\Ell(A_-)), \ \WF(X_{m})\subset \psi_{T^{\prime}_{m}}(\Ell(X_+)).
    \end{equation}
\end{lemm}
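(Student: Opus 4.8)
The plan is to carry out the covering entirely ``downstairs'' on the compact surface $\mathcal S(\lambda)=\kappa(p^{-1}(\lambda))$, where the flow $\varphi_t^{(\lambda)}:=e^{t\mathrm X(\lambda)}=\kappa_*\psi_t$ is Morse--Smale with no fixed points, and then lift back and quantize. The conceptually central input is a fact about surface dynamics: since $\mathcal S(\lambda)$ is two-dimensional, the Poincar\'e return map of a hyperbolic closed orbit is one-dimensional, so every limit cycle is either attracting or repelling (no saddle cycles), i.e.\ the limit cycles are exactly $\gamma_-(\lambda)$ and $\gamma_+(\lambda)$; and by the Morse--Smale property every orbit that is not a limit cycle has a component of $\gamma_-(\lambda)$ as its $\alpha$-limit set and a component of $\gamma_+(\lambda)$ as its $\omega$-limit set. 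In particular, $\mathcal S(\lambda)\setminus\gamma_+(\lambda)$ consists precisely of the points whose $\alpha$-limit lies on $\gamma_-(\lambda)$. Throughout I use that $\Ell(A_-)$, $\Omega\subset\Ell(X_+)$ and all flow-outs are conic, hence are in bijection with their $\kappa$-images, and that by the stability remark in \S\ref{subsec: assumptions} (shrinking $\delta$ if needed) we still have $\Lambda_-(\lambda)\subset\Ell(A_-)$ and $\Omega$ meeting every component of $\Lambda_+(\lambda)$ for all $|\lambda|\le\delta$, so it suffices to treat a fixed $\lambda$.

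\emph{Step 1: a neighbourhood of the sinks, using (CC$+$).} For each connected component $\Gamma$ of $\gamma_+(\lambda)$, (CC$+$) yields a point $\sigma_\Gamma\in\kappa(\Omega)\cap\Gamma$; since $\Gamma$ is a periodic orbit, $\Gamma=\{\varphi_t^{(\lambda)}(\sigma_\Gamma):t\ge0\}$, so $\{\varphi_t^{(\lambda)}(\kappa(\Omega)):t\ge0\}$ is an open cover of the compact set $\gamma_+(\lambda)$. Extract a finite subcover $\varphi_{s_1}^{(\lambda)}(\kappa(\Omega)),\dots,\varphi_{s_M}^{(\lambda)}(\kappa(\Omega))$ with $s_m>0$; lifting and using that $\psi_{s_m}(\Omega)$ is conic, $\Lambda_+(\lambda)\subset\bigcup_m\psi_{s_m}(\Omega)\subset\bigcup_m\psi_{s_m}(\Ell(X_+))$. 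Shrinking the cover in the routine way, choose $X_m:=\Op(x_m)$ with $x_m$ a $0$-homogeneous symbol, supported in $\psi_{s_m}(\Ell(X_+))$, equal to $1$ on conic open sets whose union $\mathcal N_+$ still contains $\Lambda_+(\lambda)$; then $\WF(X_m)\subset\psi_{s_m}(\Ell(X_+))$ with $T_m':=s_m$, and $\mathcal N_+\subset\Ell(X_+)\cup\bigcup_m\Ell(X_m)$ is a conic open neighbourhood of $\Lambda_+(\lambda)$.

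\emph{Step 2: the rest, using forward flow-outs of $A_-$; then assembly.} The set $p^{-1}(\lambda)\setminus\mathcal N_+$ is conic and closed, with compact $\kappa$-image $C\subset\mathcal S(\lambda)$ disjoint from $\gamma_+(\lambda)$ (as $\mathcal N_+$ is $\kappa$-saturated and contains $\Lambda_+(\lambda)$). Hence every $\sigma\in C$ has $\alpha(\sigma)$ a component of $\gamma_-(\lambda)\subset\kappa(\Ell(A_-))$, so $\varphi_{-t}^{(\lambda)}(\sigma)\in\kappa(\Ell(A_-))$ for large $t$, i.e.\ $\sigma\in\varphi_t^{(\lambda)}(\kappa(\Ell(A_-)))$ for some $t>0$. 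Thus $\{\varphi_t^{(\lambda)}(\kappa(\Ell(A_-))):t>0\}$ covers the compact $C$; take a finite subcover and lift to get $p^{-1}(\lambda)\setminus\mathcal N_+\subset\bigcup_\ell\psi_{T_\ell}(\Ell(A_-))$ with $0<T_\ell<\infty$. As in Step 1, choose $B_\ell:=\Op(b_\ell)$ with $b_\ell$ $0$-homogeneous, supported in $\psi_{T_\ell}(\Ell(A_-))$, equal to $1$ on conic open sets that together still cover $p^{-1}(\lambda)\setminus\mathcal N_+$; then $\WF(B_\ell)\subset\psi_{T_\ell}(\Ell(A_-))$. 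Now $\Ell(A_-)\cup\bigcup_\ell\Ell(B_\ell)\supset p^{-1}(\lambda)\setminus\mathcal N_+$ and $\Ell(X_+)\cup\bigcup_m\Ell(X_m)\supset\mathcal N_+$, so the elliptic sets of $A_-,B_\ell,X_+,X_m$ cover $p^{-1}(\lambda)$, and \eqref{eq: prin-prop} holds by construction. Since the whole construction involves only a finite family of conic open sets and finitely many finite escape times, all depending only on the Morse--Smale structure (stable for $|\lambda|\le\delta$), it can be arranged uniformly in $\lambda$, as recorded in Remark~2 after Theorem~\ref{thm: control}.

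The main obstacle is really just the dynamical step: one must know that the fixed-point-free Morse--Smale flow on the compact surface $\mathcal S(\lambda)$ has only attracting and repelling cycles and that every other orbit runs from a repelling cycle to an attracting one. This dichotomy is precisely what lets the finitely many forward flow-outs of $\Ell(A_-)$ swallow everything outside an arbitrarily small neighbourhood of the sinks, while (CC$+$) together with periodicity handles that neighbourhood. The remainder --- converting an open cover by conic sets and flow-outs into pseudodifferential operators with the prescribed $\Ell$-versus-$\WF$ containments, and ordering the two shrinking steps so that $\mathcal N_+$ is defined after the $X_m$ and before the $B_\ell$ --- is standard microlocal bookkeeping.
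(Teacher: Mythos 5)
Your proposal is correct and takes essentially the same approach as the paper: cover $\gamma_+(\lambda)$ by finitely many forward flow-images of a subset of $\Ell(X_+)$ using periodicity of the limit cycles plus compactness, cover the complement of that neighbourhood by finitely many forward flow-images of $\Ell(A_-)$ using the Morse--Smale $\alpha$-limit property, then quantize with $0$-homogeneous cutoffs supported in the flow-outs and equal to $1$ on a slightly shrunken subcover. The only cosmetic difference is the order of the shrinking: the paper fixes precompact subsets $\Omega_+\Subset\Ell(X_+)$ and $U_-\Subset\Ell(A_-)$ before flowing out, while you flow out the full elliptic sets and shrink afterwards when choosing the symbols; both are standard and equivalent.
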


\begin{proof}
    Since the bicharacteristics of $|\xi|(p-\lambda)$ are the same as the ones for $p-\lambda$ in $T^*M\setminus 0$, it suffices to consider the radial compactification (by $\kappa$ in \S \ref{subsec: assumptions}) of all the conic sets in the lemma, and replace $\psi_t$ by $\varphi_t$. We now identify the wavefront sets and elliptic sets for operators with their radial compactification. 

    Let $\Omega_+$ be an open subset of $\Ell(X_+)$ such that $\overline{\Omega_+}\subset \Ell(X_+)$ and every connected component of $\gamma_+$ intersects $\Omega_+$. From the Morse--Smale assumption \eqref{ap: msflow}, we know each connected component of $\gamma_+$ is a closed orbit under the flow $\varphi_t$, and intersects $\Omega_+$. This implies that there is $T>0$ such that $\cup_{t=0}^T \varphi_{t}(\Omega_+)$ covers the closed orbit, and its compactness gives a finite subcover. Taking the union of the finite subcovers over the finitely many connected component of $\gamma_+$, we have finitely many $T^{\prime}_{m}$, such that 
    \[ \gamma_+\subset U_+:=\bigcup_{m}\varphi_{T^{\prime}_{m}}(\Omega_+). \]
    Now let $U_-\subset T^*M\setminus 0$ be an open neighborhood of $\gamma_-$ such that $\overline{U_-}\subset \Ell(A_-)$. 

    Now for any $z\in \mathcal S\setminus U_+$, by \eqref{ap: msflow}, there exists $T(z)$ such that $z\in\varphi_{T(z)}(U_-)$. Notice that $\{\varphi_{T(z)}(U_-)\}_{z\in \mathcal S\setminus U_+}$ is an open cover of the compact set $\mathcal S\setminus U_+$, we can extract a finite cover $\{\varphi_{T(z_{\ell})}(U_-)\}$ of $\mathcal S\setminus U_+$.

    Let $b_{\ell}$, $\rho_{m}\in C^{\infty}(T^*M\setminus 0;\RR)$ be functions that are homogeneous of order $0$ and 
    \[ \supp b_{\ell}\subset \varphi_{T(z_{\ell})}(\Ell(A_-)), \ b_-|_{\overline{\varphi_{T(z_{\ell})}(U_-)}}=1; \ \supp \rho_{m}\subset \varphi_{T^{\prime}_{m}}(\Ell(X_+)), \ \rho_{m}|_{\overline{\varphi_{T^{\prime}_{m}}(\Omega_+)}}=1.  \]
    Put
    \[ B_{\ell}:=\Op(b_{\ell}), \ X_{m}:=\Op(\rho_{m}) \]
    and we complete the proof.
\end{proof}

\begin{lemm}
    \label{lem: source_cover}
    Suppose $A_+$, $B_+$, $X_-\in \Psi^0(M)$ such that 
    \[ \Lambda_+\subset \Ell(A_+), \ \Lambda_+\cap \WF(B_+)=\emptyset, \ \Ell(X_-) \text{ satisfies (CC$-$). } \]
    Then there exist finitely many operators $\{X_{m}\}_m\subset \Psi^0(M)$, such that the elliptic sets of $ A_+, X_-, X_{m}$, cover $ p^{-1}(\lambda)\subset T^*M\setminus 0 $, and the elliptic sets of $ X_-, X_{m}$, cover $\WF(B_+)$. Moreover, for each $m$ there exist $0<T_{m}<\infty$, such that 
    \begin{equation}
        \WF(X_{m}) \subset \psi_{T_{m}}(\Ell(X_-)).
    \end{equation}
\end{lemm}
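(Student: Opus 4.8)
The plan is to mirror the proof of Lemma~\ref{lem: sink_cover}, swapping the roles of the repulsive cycle $\gamma_-$ and the attractive cycle $\gamma_+$ and using (CC$-$) in place of (CC$+$), with one genuinely new ingredient: a simultaneous cover of $\WF(B_+)$. As there, since the bicharacteristics of $|\xi|(p-\lambda)$ coincide with those of $p-\lambda$ on $T^*M\setminus 0$, I would pass to the radial compactification, work on the compact surface $\mathcal S=\kappa(p^{-1}(\lambda))$ with the flow $\varphi_t=e^{t\mathrm X}$, and identify elliptic and wavefront sets with their images under $\kappa$. Recall from \eqref{ap: msflow} that every connected component of $\gamma_\pm$ is a closed hyperbolic orbit and that, since the flow is Morse--Smale with no fixed points on a surface, every trajectory which is not one of these cycles has its $\alpha$-limit set a component of $\gamma_-$ and its $\omega$-limit set a component of $\gamma_+$; equivalently, running such a point backward along $\varphi_t$ it approaches $\gamma_-$.

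First I would invoke (CC$-$) to choose a conic open $\Omega_-\subset\Ell(X_-)$ with $\overline{\Omega_-}\subset\Ell(X_-)$ meeting every component of $\gamma_-$; since these components are periodic orbits meeting $\Omega_-$, the set $\bigcup_{t\ge 0}\varphi_t(\Omega_-)$ is an open neighborhood of $\gamma_-$ in $\mathcal S$. Next pick an open neighborhood $U_+$ of $\gamma_+$ with $\overline{U_+}\subset\Ell(A_+)$. For any $z\in\mathcal S\setminus U_+$ we have $z\notin\gamma_+$, so its backward $\varphi_t$-trajectory eventually enters $\bigcup_{t\ge 0}\varphi_t(\Omega_-)$; hence there is $T(z)>0$ with $z\in\varphi_{T(z)}(\Omega_-)$ (for $z\in\gamma_-$ one adds a period). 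By compactness of $\mathcal S\setminus U_+$ finitely many $\varphi_{T(z_m)}(\Omega_-)$ cover it; set $T_m:=T(z_m)$, choose $\rho_m\in C^\infty(T^*M\setminus 0;[0,1])$ homogeneous of degree $0$ with $\supp\rho_m\subset\psi_{T_m}(\Ell(X_-))$ and $\rho_m\equiv 1$ on $\overline{\psi_{T_m}(\Omega_-)}$, and put $X_m:=\Op(\rho_m)$. Then $\WF(X_m)\subset\psi_{T_m}(\Ell(X_-))$, and $\Ell(A_+)\cup\bigcup_m\Ell(X_m)\supset U_+\cup(\mathcal S\setminus U_+)=\mathcal S$, which lifts (together with $\Ell(X_-)$) to the required cover of $p^{-1}(\lambda)$.

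It remains to arrange that $\Ell(X_-)$ and the $\Ell(X_m)$ also cover $\WF(B_+)$ --- the step absent from Lemma~\ref{lem: sink_cover}. This is where $\WF(B_+)\cap\Lambda_+=\emptyset$ is used: in the application $\WF(B_+)$ is a punctured conic neighborhood of $\Lambda_+$ (it is the a priori term in the radial sink estimate of \cite[Theorem E.54]{dz19}), so $\kappa(\WF(B_+))$ is a compact subset of $\partial\overline{T^*M}$ lying at positive distance from $\gamma_+$ and within a small neighborhood of the piece of $\mathcal S$ near $\gamma_+$. Since $\Lambda_+$ is a radial sink, $\gamma_+$ is repelling for $\varphi_{-t}$, so the backward flow drives every such point away from $\gamma_+$ and, by continuity of the full Hamiltonian flow together with $\overline{\Omega_-}\subset\Ell(X_-)$, into $\Ell(X_-)$ in finite time; by compactness these times are uniformly bounded. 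Enlarging the cover above to a cover of the compact set $(\mathcal S\cup\kappa(\WF(B_+)))\setminus U_+$ and taking the conic sets $\psi_{T_m}(\Ell(X_-))$ correspondingly fatter, the same family $\{X_m\}$ does the job. I expect this last point to be the main obstacle: one must control the full, non-characteristic Hamiltonian flow in a conic neighborhood of the sink end of $\mathcal S$, which is exactly where the radial-sink structure of $\Lambda_+$ (and its stability under the perturbation $\omega$, recorded in the Remark of \S\ref{subsec: assumptions}) is needed, rather than merely the Morse--Smale structure of $\varphi_t$ on $\mathcal S$.
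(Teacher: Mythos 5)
The paper gives no proof of this lemma beyond the one-line remark that it is ``similar to the proof of Lemma~\ref{lem: sink_cover},'' so the comparison is necessarily against that earlier proof and the structure of its intended adaptation. Your argument for the first conclusion (covering $p^{-1}(\lambda)$) is the correct adaptation: choose $\Omega_-\subset\Ell(X_-)$ meeting every component of $\gamma_-$, propagate it forward to cover the periodic orbits and then all of $\mathcal S\setminus U_+$ for a neighborhood $U_+$ of $\gamma_+$ with $\overline{U_+}\subset\kappa(\Ell(A_+))$, extract a finite subcover by compactness, and quantize; this produces the $X_m$ with $\WF(X_m)\subset\psi_{T_m}(\Ell(X_-))$, and no $B_\ell$'s are needed because, unlike in Lemma~\ref{lem: sink_cover}, the forward flow out of the source region already reaches all of $\mathcal S\setminus U_+$.

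The genuine gap is in the second conclusion, the cover of $\WF(B_+)$. Your argument is that $\gamma_+$ is repelling for the backward flow and so the backward Hamiltonian flow drives points of $\kappa(\WF(B_+))$ ``into $\Ell(X_-)$ in finite time, by continuity.'' But the Morse--Smale hypothesis~\eqref{ap: msflow} only constrains the flow $\varphi_t$ on the codimension-one surface $\mathcal S=\kappa(p^{-1}(\lambda))\subset\partial\overline{T^*M}$. Off $\mathcal S$, the descended flow $\widetilde\varphi_t$ generated by $H_{|\xi|(p-\lambda)}$ is unconstrained: it need not be Morse--Smale, it may have fixed points or other invariant sets disjoint from $\kappa(\Ell(X_-))$, and ``moving away from $\gamma_+$'' (which the radial-sink structure does give locally) is not the same as ``eventually entering $\kappa(\Ell(X_-))$.'' Consequently the claim that finitely many $\psi_{T_m}(\Ell(X_-))$ cover $\WF(B_+)$ cannot follow from the hypothesis $\Lambda_+\cap\WF(B_+)=\emptyset$ alone. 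What does work, and what the paper must implicitly intend, is more elementary and avoids any appeal to the off-characteristic dynamics: since $\kappa(\WF(B_+))$ is compact and disjoint from $\gamma_+$, shrink $U_+$ so that also $\overline{U_+}\cap\kappa(\WF(B_+))=\emptyset$; the sets $\widetilde\varphi_{T_m}(\kappa(\Omega_-))$ covering $\mathcal S\setminus U_+$ are \emph{open in $\partial\overline{T^*M}$}, hence cover an open neighborhood of $\mathcal S\setminus U_+$, and $\kappa(\WF(B_+))$ lies inside that neighborhood whenever $\WF(B_+)$ is contained in a sufficiently small conic neighborhood of $p^{-1}(\lambda)$. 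This smallness is available in the only place the lemma is used — the $B_+$ produced by the radial sink estimate can be chosen with $\WF(B_+)$ in an arbitrarily small punctured conic neighborhood of $\Lambda_+$ — but it is an additional restriction on $B_+$, not a consequence of what the lemma states, and your proof needs to invoke it explicitly rather than gesturing at ``continuity of the full Hamiltonian flow.''
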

The proof for Lemma \ref{lem: source_cover} is similar to the proof of Lemma \ref{lem: sink_cover}.

We now prove the control estimates in Theorem \ref{thm: control}.
\begin{proof}[Proof of Theorem \ref{thm: control}]
    For $\omega=\lambda+i\epsilon$, $\lambda\in [-\delta,\delta]$, $\epsilon>0$, we denote 
    \[\begin{gathered} 
        \mathbf P_1(\omega):= \langle D \rangle^{1/2}(\mathbf P -\omega)\langle D \rangle^{1/2}, \ u_1:=\langle D \rangle^{-1/2}u\in C^{\infty}, \\ 
        P_1:=\langle D \rangle^{1/2}(P-\lambda)\langle D \rangle^{1/2}, \ Q_1:=\langle D \rangle^{1/2}Q \langle D \rangle^{1/2}. 
    \end{gathered}\]

    \textbf{Principal type of propagation estimates.}
    Suppose $A, B\in \Psi^0(M)$ such that there exists $T>0$ such that 
    \[ \psi_{-T}(\WF(B))\subset \Ell(A). \]
    Then for any $s$, $N\in \RR$, there exists $C>0$ such that for any $u_1\in C^{\infty}(M)$, we have 
    \begin{equation}
        \label{eq: prop}
        \|B u_1\|_{H^s}\leq C \|A u_1\|_{H^s} + C \|\mathbf P_1(\omega) u_1\|_{H^s} + C\|u_1\|_{H^{-N}}.
    \end{equation}
    Here \eqref{eq: prop} follows from \cite[Theorem E.47]{dz19} and the fact that 
    \begin{equation}
        \Im \sigma(\mathbf P_1(\omega))=|\xi|(q-\epsilon)\leq 0.
    \end{equation}

    \textbf{Radial source estimates.}
    The goal of this step is to show the following estimate: there exists $A_-\in \Psi^0(M)$ such that $\Lambda_-\subset \Ell(A_-)$ and for any $r,N\in \RR$ such that 
    \begin{equation}
        r>\sup_{\Lambda_-(\lambda)}\left( -\frac{|\xi|q_{-1}}{H_p|\xi|} \right),
    \end{equation}
    there exists $C>0$ such that for any $u_1\in C^{\infty}(M)$, we have 
    \begin{equation}
        \label{eq: source-est}
        \|A_-u_1\|_{H^r}\leq C\|\mathbf P_1(\omega)u_1\|_{H^r}+C\|u_1\|_{H^{-N}}.
    \end{equation}

    The proof of \eqref{eq: source-est} is a modification of the proof of \cite[Theorem E.52]{dz19} and \cite[\S 3]{force}. Indeed, let $B_-\in \Psi^0(M)$ such that $\Lambda_-\subset \Ell(B_-)$, $\WF(B_-)\cap \Lambda_+=\emptyset$ and $\rho\in C^{\infty}(T^*M\setminus 0;[0,1])$ be an \emph{escape function} such that $\rho$ is homogeneous of order 0 and 
    \[  \supp\rho\subset \Ell(B_-), 
     \rho =1 \text{ near } \Lambda_-, \ H_{|\xi|(p-\lambda)}(\rho)\leq 0. \]
     For the construction of such functions, see \cite[Lemma C.1]{zeta} or \cite[Lemma E.53]{dz19}.

    Now for $r\in \RR$, we put $G:=\Op(\langle \xi \rangle^r \rho)\in \Psi^r(M)$. Then 
    \[\begin{split} 
        \Im \langle \mathbf P_1(\omega)u_1, G^*G u_1 \rangle
        = & \langle -\tfrac{1}{2i}[P_1, G^*G]u_1,u_1 \rangle 
        + \langle (Q_1-\epsilon\langle D \rangle)Gu_1, Gu_1 \rangle \\
        & + \langle \Re\left( G^*[G,Q_1-\epsilon\langle D \rangle]\right)u_1, u_1 \rangle. 
    \end{split}\] 
    Notice that $\alpha\sqrt{-|\xi|q}\in S^{1/2}(T^*M;\RR)$, where $\alpha$ is given in \S \ref{subsec: assumptions}. Let $Q_{1/2}:=\Op(\alpha \sqrt{-|\xi| q})\in \Psi^{1/2}(M)$, then
    \[ Q_1+ Q_{1/2}^*Q_{1/2}\in \Psi^{0}(M), \ \sigma(Q_1+Q_{1/2}^*Q_{1/2})=|\xi|q_{-1}. \]
    Now we have
    \begin{equation}\begin{split}
        \label{eq: com}
        & \Im \langle \mathbf P_1(\omega) u_1, G^*G u_1 \rangle \\
        & =   \langle Tu_1, u_1 \rangle - \| Q_{1/2} G u_1 \|^2_{L^2} -\epsilon \|\langle D \rangle^{1/2}G u_1\|_{L^2}^2 + \langle \Re(G^*[G,Q_1-\epsilon\langle D \rangle])u_1,u_1 \rangle, \\
        & \leq  \langle Tu_1, u_1 \rangle + \langle \Re(G^*[G,Q_1-\epsilon\langle D \rangle])u_1,u_1 \rangle
    \end{split}\end{equation}
    with 
    \[T:=-\tfrac{1}{2i}[P_1, G^*G]+G^*(Q_1+Q_{1/2}^*Q_{1/2})G \in \Psi^{2r}(M), \ T^*=T.\]
    \begin{enumerate}[(i)]
        \item Since $G^*[G,Q_1-\epsilon\langle D \rangle]\in \Psi^{2r}(M)$ has purely imaginary principal symbol, we know 
        \[ \Re(G^*[G,Q_1-\epsilon\langle D \rangle])\in \Psi^{2r-1}(M), \ \WF(\Re(G^*[G,Q_1-\epsilon\langle D \rangle]))\subset \Ell(B_-). \]
        By the elliptic estimates \cite[Theorem E.33]{dz19}, we have 
        \begin{equation}
            \label{eq: ell}
            \langle \Re(G^*[G,Q_1-\epsilon\langle D \rangle])u_1, u_1 \rangle \leq C \|B_-u_1\|_{H^{r-1/2}}^2+C\|u_1\|^2_{H^{-N}}.
        \end{equation}

        \item The principal symbol of $T$ is 
            \begin{equation}
                \sigma(T) = |\xi|^{2r}\rho \left[ H_{|\xi|(p-\lambda)}\rho + \rho \left( rH_p|\xi|+|\xi|q_{-1} \right) \right].
            \end{equation}
            Let $r$ satisfies 
            \begin{equation}
                \label{eq: high-reg}
                r>\max_{\Lambda_-(\lambda)}\left( -\tfrac{|\xi|q_{-1}}{H_p|\xi|} \right),
            \end{equation}
            then there exists $\nu>0$ and a conic open neighborhood $U\subset B_-$ of $\Lambda_-(\lambda)$ such that 
            \begin{equation}
                rH_p|\xi|+|\xi|q_{-1}\leq -\nu \text{ on } U.
            \end{equation}
            Thus 
            \begin{equation}
                \sigma(T+\nu G^*G)|_U \leq 0. 
            \end{equation}
            Now apply the microlocal G\aa rding inequality \cite[Proposition E.34]{dz19}, with $(A,B,B_1)$ there replaced by $(-T-\nu G^* G, 0, B_-)$ and $h=1$, we have  
            \begin{equation}
                \label{eq: gar}
                \nu \|G u_1\|_{L^2}^2 \leq -\langle T u_1, u_1 \rangle +C\|B_- u_1\|_{H^{r-1/2}}^2 + C\|u_1\|_{H^{-N}}^2.
            \end{equation}
    \end{enumerate}

    Combining \eqref{eq: com}, \eqref{eq: ell} and \eqref{eq: gar} and we find 
    \begin{equation}
         \nu\|Gu_1\|_{L^2}^2\leq -\Im \langle G \mathbf P_1(\omega)u_1, G u_1 \rangle + C \|B_- u_1\|_{H^{r-1/2}}^2 + C\|u_1\|_{H^{-N}}^2.
    \end{equation}
    Use Cauchy--Schwartz and we have 
    \begin{equation}
        \|G u_1\|_{L^2}\leq C\|G\mathbf P_1(\omega) u_1\|_{L^2}+C\|B_- u_1\|_{H^{r-1/2}} + C\|u_1\|_{H^{-N}}.
    \end{equation}
    Let $A_-:=\Op(\rho)$, then by the elliptic estimates \cite[Theorem E.33]{dz19}
    \begin{equation}
        \label{eq: prop1}
        \|A_-u_1\|_{H^r}\leq C\| \mathbf P_1(\omega) u_1 \|_{H^r}+ C\|B_-u_1\|_{H^{r-1/2}}+C\|u_1\|_{H^{-N}}.
    \end{equation}

    To remove the $\|B_-u_1\|_{H^{r-1/2}}$ term, we use the propagation estimates \eqref{eq: prop}
    and find 
    \begin{equation}
        \|B_-u_1\|_{H^{r-1/2}}\leq C\|A_-u_1\|_{H^{r-1/2}}+C\|\mathbf P_1(\omega)u_1\|_{H^{r-1/2}}+C\|u_1\|_{H^{-N}}.
    \end{equation}
    Apply the interpolation inequality for $H^{r-1/2}$ and $H^{r}$, $H^{-N}$ to the term $\|A_-u_1\|_{H^{r-1/2}}$, and we have 
    \begin{equation}
        \|B_-u_1\|_{H^{r-1/2}}\leq \tfrac12 \|A_-u_1\|_{H^{r}}+C\|\mathbf P_1(\omega) u_1\|_{H^r}+C\|u_1\|_{H^{-N}}.
    \end{equation}
    This together with \eqref{eq: prop1} gives \eqref{eq: source-est}.

    \textbf{Radial sink estimates.}
    Similar to \eqref{eq: source-est}, we can also prove the following radial sink estimates for $\mathbf P_1(\omega)$: there exist $A_+, B_+\in \Psi^0(M)$ such that $\Lambda_+\subset \Ell(A_+)$, $\WF(B_+)\cap \Lambda_+=\emptyset$ and for any $N\in \RR$,
    \[ r<\inf_{\Lambda_+(\lambda)}\left( -\frac{|\xi| q_{-1}}{H_p|\xi|} \right) \]
    there exists $C>0$ such that for any $u_1\in C^{\infty}$, we have 
    \begin{equation}
        \label{eq: sink-est}
        \|A_+ u_1\|_{H^r} \leq C\|B_+u_1\|_{H^r} + C\|\mathbf P_1(\omega) u_1\|_{H^r} + C \|u_1\|_{H^{-N}}.
    \end{equation}

    \textbf{Control by the sink.}
    Suppose $X_+\in \Psi^0(M)$ and $\Ell(X_+)$ satisfies (CC$+$). We now prove \eqref{eq: sink-control}. 

    Let $\widetilde{X}_+\in \Psi^0(M)$ be such that $\WF(\widetilde{X}_+)\subset \Ell(X_+)$ and $\Ell(\widetilde{X}_+)$ satisfy (CC$+$). Let $A_-\in \Psi^0(M)$ be as in \eqref{eq: source-est}. Then $A_-$, $\widetilde{X}_+$ satisfy conditions in Lemma \ref{lem: sink_cover} and we can find $B_{\ell}$, $X_m\in \Psi^0(M)$ satisfying conditions in Lemma \ref{lem: sink_cover}. Then by \eqref{eq: prop}, for any $r, N\in \RR$ and each $\ell$, $m$, there exists $C>0$ such that for any $u_1\in C^{\infty}$, we have 
    \begin{equation}\begin{gathered}
        \label{eq: sink-prop}
        \|B_{\ell}u_1\|_{H^r}\leq C\|A_-u_1\|_{H^r} + C\|\mathbf P_1(\omega) u_1\|_{H^r} +C\|u_1\|_{H^{-N}}, \\
        \|X_m u_1\|_{H^r}\leq C\|\widetilde{X}_+ u_1\|_{H^r}+C\|\mathbf P_1(\omega) u_1\|_{H^r} + C\|u_1\|_{H^{-N}}.
    \end{gathered}\end{equation} 
    Combine \eqref{eq: source-est}, \eqref{eq: sink-prop}, the fact that the elliptic sets of $A_-$, $B_{\ell}$, $\widetilde{X}_+$, $X_m$ covers $p^{-1}(\lambda)$, and the elliptic estimates \cite[Theorem E.33]{dz19}, we know that for $r$ satisfying \eqref{eq: high-reg}
    \begin{equation}
        \|u_1\|_{H^r}\leq C\|\widetilde{X}_+ u_1\|_{H^r} + C\|\mathbf P_1(\omega) u_1\|_{H^r} + C \|u_1\|_{H^{-N}}.
    \end{equation}
    The control estimate \eqref{eq: sink-control} now follows from the elliptic estimate applied to $\widetilde{X}_+\langle D \rangle^{\frac12}$ and $\langle D \rangle^{\frac12}X_+$.

    \textbf{Control by the source.} The control estimate \eqref{eq: source-control} is proved in a similar way. 
    
    Assume $X_-\in \Psi^0(M)$ satisfies (CC$-$). Let $\widetilde{X}_-\Psi^0(M)$ be such that $\WF(\widetilde{X}_-)\subset \Ell(X_-)$ and $\Ell(\widetilde{X}_-)$ satisfy (CC$-$). Let $A_+$, $B_+\in \Psi^0(M)$ be as in \eqref{eq: sink-est}. We then apply Lemma \ref{lem: source_cover} to $A_+$, $B_+$, $\widetilde{X}_-$ and get operators $X_m$. 
    
    By \eqref{eq: prop} we find for $r$, $N\in \RR$,
    \begin{equation}
        \label{eq: source-prop}
        \|X_m u_1\|_{H^r}\leq C\|\widetilde{X}_- u_1\|_{H^r} + C\|\mathbf P_1(\omega) u_1\|_{H^r} + C \|u_1\|_{H^{-N}}.
    \end{equation}
    The estimate \eqref{eq: source-control} is now a result of \eqref{eq: sink-est}, \eqref{eq: source-prop} and elliptic estimates.
\end{proof}

\section{The limiting absorption principle}

From now on we assume
\begin{equation} 
    \label{ap: p-chi}
    \mathbf P=P - i\chi, \ \chi\in C^{\infty}(M;\RR_{\geq 0}). 
\end{equation}
We ask $T^*(\supp \chi)\setminus 0$ to satisfy the control condition (CC+). This is equivalent to say that 
\begin{equation} 
    \label{ap: chi-control}
    \supp\chi \text{ intersects with each connected component of } \pi(\Lambda_+).  
\end{equation}
Here $\pi: T^*M\setminus 0\to M$ is the natural projection. 
We remark that in this case, $q_{-1}=0$. Hence the regularity conditions \eqref{eq: hi-reg}, \eqref{eq: lo-reg} become
\[ s>-\tfrac12, \text{ or } s<-\tfrac12 \]
respectively.

\begin{lemm}
    \label{lem: finite-eig}
    Suppose $\mathbf P$ satisfies \eqref{ap: rv}, \eqref{ap: msflow}, \eqref{ap: p-chi}, \eqref{ap: chi-control}. Then there exists $\delta>0$ such that 
    \[ |\mathrm{Spec}_{L^2, \mathrm{pp}}(\mathbf P) \cap [-\delta,\delta]|<\infty. \]
\end{lemm}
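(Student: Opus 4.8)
The plan is to combine the sink-control estimate \eqref{eq: sink-control} of Theorem~\ref{thm: control} with a compactness argument, after first reducing to eigenfunctions of the self-adjoint part $P$. For the reduction, note that since $[-\delta,\delta]\subset\RR$ every $\omega\in\mathrm{Spec}_{L^2,\mathrm{pp}}(\mathbf P)\cap[-\delta,\delta]$ is real; if $\mathbf P u=\omega u$ with $0\ne u\in L^2(M)$, then taking imaginary parts in $\langle \mathbf P u,u\rangle=\omega\|u\|_{L^2}^2\in\RR$ and using $\chi\ge 0$ gives $\langle\chi u,u\rangle=\int_M\chi|u|^2\,dm=0$, hence $\chi^{1/2}u=0$, so $\chi u=0$ and $Pu=\mathbf P u+i\chi u=\omega u$. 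Thus $u$ is an $L^2$ eigenfunction of the self-adjoint operator $P$ with eigenvalue $\omega$, and $u$ vanishes on the open set $\{\chi>0\}$; in particular $\WF(u)\cap\pi^{-1}(\{\chi>0\})=\emptyset$. (For $\omega=0$ this is exactly the equivalence with \eqref{eq: unq-cont} referred to in the first remark after Theorem~\ref{thm: damp}.)

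Next I would extract a uniform regularity bound. Fix $\delta>0$ as in the Remark of \S\ref{subsec: assumptions}, so that \eqref{ap: rv}, \eqref{ap: msflow} and the control condition (CC$+$) for $T^*(\supp\chi)\setminus0$ persist for all spectral parameters with real part in $[-\delta,\delta]$. Using \eqref{ap: chi-control}, fix one operator $X_+\in\Psi^0(M)$ with $\Ell(X_+)$ satisfying (CC$+$) and $\WF(X_+)\subset\pi^{-1}(\{\chi>0\})$. For $u$ as above we then have $\WF(X_+)\cap\WF(u)=\emptyset$, so $X_+u\in C^\infty(M)$ with $\|X_+u\|_{H^s}\le C_{s,N}\|u\|_{H^{-N}}$ for all $s,N$. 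Since $q_{-1}=0$ here and $\Im\omega=0\ge0$, the control estimate \eqref{eq: sink-control} applies for every $s>-\tfrac12$ (extended from $C^\infty$ to $u\in L^2$ by the usual regularization of the radial and propagation estimates), with constant independent of $\omega\in[-\delta,\delta]$ by the second remark after Theorem~\ref{thm: control}. Because $(\mathbf P-\omega)u=0$, this gives, for every $s>-\tfrac12$,
\[ \|u\|_{H^s}\le C\|X_+u\|_{H^s}+C\|u\|_{H^{-N}}\le C_s\|u\|_{L^2}, \]
with $C_s$ independent of the eigenpair $(\omega,u)$.

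The lemma then follows by a compactness argument. Suppose, for contradiction, that there are infinitely many distinct eigenvalues $\omega_1,\omega_2,\dots\in[-\delta,\delta]$ with $L^2$-normalized eigenfunctions $u_1,u_2,\dots$. By the reduction step $Pu_j=\omega_j u_j$, and since $P$ is self-adjoint with the $\omega_j$ distinct and real, $\langle u_j,u_k\rangle=0$ for $j\ne k$; hence $\|u_j-u_k\|_{L^2}^2=2$, and $(u_j)$ has no subsequence that is Cauchy in $L^2$. On the other hand, the regularity estimate with $s=1$ shows $(u_j)$ is bounded in $H^1(M)$, so by Rellich's theorem a subsequence converges in $L^2(M)$ — a contradiction. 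Therefore $\mathrm{Spec}_{L^2,\mathrm{pp}}(\mathbf P)\cap[-\delta,\delta]$ is finite.

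The step I expect to require the most care is the choice of $X_+$: one must produce a zeroth order operator whose elliptic set still satisfies (CC$+$), so that Theorem~\ref{thm: control} is applicable, yet whose wavefront set lies over $\{\chi>0\}$, where the eigenfunction vanishes, so that the control term in \eqref{eq: sink-control} is absorbed into $C\|u\|_{H^{-N}}$. This is where the hypothesis \eqref{ap: chi-control} — that the damping region meets every connected component of $\pi(\Lambda_+)$ — is genuinely used, together with the fact that the radial source estimate built into the proof of Theorem~\ref{thm: control} already controls $u$ near the repelling Lagrangian $\Lambda_-$ unconditionally, so that control from the damping is only needed in a neighbourhood of the attracting Lagrangian $\Lambda_+$. (Alternatively, one could set up $\mathbf P-\omega$ as a holomorphic Fredholm family on the anisotropic spaces adapted to $\Lambda_\pm$, invertible for $\Im\omega$ large, and invoke the meromorphic Fredholm theorem; the argument above is more elementary and uses only what has already been established.)
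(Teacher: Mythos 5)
Your proof is correct, and it takes a genuinely different route from the paper's. Both proofs begin with the same reduction: for $\omega\in[-\delta,\delta]$ and $u\in\mathrm{Ker}_{L^2}(\mathbf P-\omega)$, taking imaginary parts of $\langle(\mathbf P-\omega)u,u\rangle=0$ together with $\chi\ge0$ forces $\chi u=0$ and hence $(P-\omega)u=0$. At that point the paper simply invokes \cite[Lemma~3.2]{force}, which already gives finiteness of $\mathrm{Spec}_{L^2,\mathrm{pp}}(P)\cap[-\delta,\delta]$ for a self-adjoint $0$th order $P$ satisfying the Morse--Smale hypotheses, and concludes via the inclusion $\mathrm{Spec}_{L^2,\mathrm{pp}}(\mathbf P)\cap[-\delta,\delta]\subset\mathrm{Spec}_{L^2,\mathrm{pp}}(P)\cap[-\delta,\delta]$. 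You instead re-derive the needed compactness yourself: you exploit the additional information that $u$ vanishes on $\{\chi>0\}$ to choose $X_+$ with $\WF(X_+)\subset\pi^{-1}(\{\chi>0\})$ and $\Ell(X_+)$ satisfying (CC$+$), so that $X_+u$ is $O(\|u\|_{H^{-N}})$; feeding this into \eqref{eq: sink-control} with $(\mathbf P-\omega)u=0$ gives a uniform $H^1$ bound, and then orthogonality of eigenfunctions of the self-adjoint $P$ at distinct eigenvalues combined with Rellich compactness yields the contradiction. Your argument is longer but self-contained within this paper (it uses only Theorem~\ref{thm: control} plus Rellich, not the external lemma), whereas the paper's is a two-line reduction to a cited result. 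Two small points worth keeping in mind: (i) the passage from \eqref{eq: sink-control} for $u\in C^\infty$ to $u\in L^2$ does go through, because the a priori regularity threshold at the radial source $\Lambda_-$ is $s_0>-\tfrac12$ and $u\in H^0$, so the radial source estimate of \cite{dz19} applies directly; and (ii) the choice of $X_+$ with $\WF(X_+)\subset\pi^{-1}(\{\chi>0\})$ uses that the open set $\{\chi>0\}$ — not merely $\supp\chi$ — meets each component of $\pi(\Lambda_+)$, which is exactly how the paper itself reads \eqref{ap: chi-control} when it later constructs $\widetilde\chi$ in the proof of Lemma~\ref{lem: unique}.
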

\begin{proof}
    By \cite[Lemma 3.2]{force}, it suffices to show that 
    \[ \{ \Re \omega \ | \ \omega \in \mathrm{Spec}_{L^2, \mathrm{pp}}(\mathbf P), \Im\omega\geq 0 \} \subset \mathrm{Spec}_{L^2, \mathrm{pp}}(P). \]
    In fact, assume $\omega\in \CC$, $\Im \omega\geq 0$, $u\in \mathrm{Ker}_{L^2}(\mathbf P-\omega)$.
    Then 
    \[ 0=\Im \langle (\mathbf P-\omega)u, u \rangle = -\langle (\chi+\Im \omega) u, u \rangle= - \int_{M} (\chi+\Im \omega) |u|^2 dx. \]
    Since $\chi+\Im \omega\geq 0$, we know $(\chi+\Im \omega) u=0$. Thus $(P-\Re \omega)u=0$.
\end{proof}

We record a lemma on the regularity of solutions.
\begin{lemm}
    \label{lem: unique}
    Let $\mathbf P$ be as in Lemma \ref{lem: finite-eig}. Then there exists $\delta>0$ such that
    \[ (\mathbf P-\omega)u\in C^{\infty}, \ u\in \mathscr D^{\prime}(M), \ \omega\in [-\delta,\delta] \]
    implies that $u\in C^{\infty}(M)$.
\end{lemm}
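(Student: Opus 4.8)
The plan is to upgrade the regularity of $u$ by a bootstrap argument that combines the microlocal control estimate of Theorem \ref{thm: control} with the absence of point spectrum near $0$. First I would observe that it suffices to treat $\omega$ real: for $\omega = \lambda + i\epsilon$ with $\epsilon \neq 0$ small, the argument below works verbatim with $\Im\omega \geq 0$ (indeed $\epsilon > 0$ only helps, as $\Im\sigma(\mathbf P_1(\omega)) = |\xi|(q-\epsilon) \leq 0$ is preserved), and the borderline real case is the one requiring care. Since $u \in \mathscr D'(M)$, we have $u \in H^{-N}(M)$ for some (large) $N$. The goal is to show $u \in H^s$ for every $s$; since the thresholds here are $s > -\tfrac12$ (sink) and $s < -\tfrac12$ (source), because $q_{-1} = 0$ under \eqref{ap: p-chi}, I will split the problem at the value $-\tfrac12$.

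The key step is to apply Theorem \ref{thm: control} with the control set taken on the \emph{sink} Lagrangian $\Lambda_+$, but near the damping region. Since $T^*(\supp\chi)\setminus 0$ satisfies (CC+), I would choose $X_+ \in \Psi^0(M)$ with $\WF(X_+) \subset T^*(\text{nbhd of }\supp\chi)\setminus 0$ and $\Ell(X_+)$ still satisfying (CC+); by shrinking, arrange that $\chi$ is elliptic (i.e. $\chi > 0$) on $\WF(X_+)$. Then $X_+ u = X_+ \chi^{-1} \chi u$ microlocally, and $\chi u = i((\mathbf P - \omega)u - (P-\omega)u)$... wait, more directly: $\chi u = i(\mathbf P - P)u = i((\mathbf P-\omega)u - (P-\omega)u)$, which is not obviously better. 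The cleaner route: on $\WF(X_+)$ we can write $X_+ u = X_+ E (\chi u) + (\text{smoothing})u$ where $E$ is a microlocal parametrix for multiplication by $\chi$, so $\|X_+ u\|_{H^s} \lesssim \|\chi u\|_{H^s} + \|u\|_{H^{-N}}$; and $\chi u \in C^\infty$ requires knowing $u$ smooth near $\supp\chi$, which is circular. Instead I use: $\Im\langle(\mathbf P-\omega)u,u\rangle = -\langle(\chi+\Im\omega)u,u\rangle$ shows $\chi^{1/2}u \in L^2$, and more refined weighted/microlocal versions of this energy identity, combined with $(\mathbf P-\omega)u \in C^\infty$, give $X_+ u \in H^s$ for all $s$ near $\supp\chi$ by an elliptic-parametrix argument using the ellipticity of $\chi$ there — this is the analogue of the standard fact that solutions of the damped wave equation are smooth wherever the damping is elliptic. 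Feeding $\|X_+ u\|_{H^s} \leq C$ and $\|(\mathbf P-\omega)u\|_{H^{s+1}} \leq C$ into \eqref{eq: sink-control} yields $u \in H^s$ for every $s > -\tfrac12$.

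It remains to cross the threshold $s = -\tfrac12$ downward, i.e. to rule out that $u$ has genuine $H^{-1/2-}$ singularities on $\Lambda_+$ that are invisible to the above estimate. Here I would invoke $0 \notin \mathrm{Spec}_{\mathrm{pp}}(\mathbf P)$ via Lemma \ref{lem: finite-eig}: having shown $u \in H^{-1/2+}$, I apply the \emph{source} control estimate \eqref{eq: source-control} — no, rather I argue by a Fredholm-type uniqueness. The standard scheme (as in \cite{force}, \cite{dz19} radial-estimate theory) is: the radial source estimate \eqref{eq: source-est} propagates regularity \emph{into} $\Lambda_-$ from below for any $s < -\tfrac12$, and combined with propagation \eqref{eq: prop} and the sink estimate this gives, for $s$ slightly below $-\tfrac12$, a global bound $\|u\|_{H^s} \leq C\|(\mathbf P-\omega)u\|_{H^{s+1}} + C\|u\|_{H^{-N}}$ — but this needs the term $\|X_- u\|$ or $\|X_+ u\|$ absorbed, which is exactly where one would be stuck without extra input. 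The extra input is: if $u$ failed to be smooth, one could after subtracting the $C^\infty$ part produced above extract a nonzero element of $\mathrm{Ker}_{L^2}(\mathbf P-\omega)$ (using that $u \in H^{-1/2+} \subset$ the right space after the improvement, and a normal-operator/ellipticity argument to land in $L^2$), contradicting $0 \notin \mathrm{Spec}_{\mathrm{pp}}(\mathbf P)$ together with the stability Remark in \S\ref{subsec: assumptions} which lets us shrink $\delta$ so that no point spectrum occurs in $[-\delta,\delta]$. The main obstacle is precisely this last step: carefully setting up the argument so that the $H^{-1/2+}$ solution, once shown to have no singularities on $\Lambda_+$, can be promoted to an honest $L^2$ eigenfunction (or shown directly to be smooth), which requires a limiting-absorption / duality argument of the type developed in \cite{force, vis} rather than a soft bootstrap. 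I expect the write-up to reduce it to citing \cite[Lemma 3.2 and §3]{force} for the structure of $L^2$ solutions near $\Lambda_\pm$, then concluding $u \in C^\infty$ by the $H^s$ bounds for all $s$.
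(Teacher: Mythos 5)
Your first half is essentially the paper's argument, though arrived at circuitously. The cleanest route, and the one the paper takes, is simply this: on $\{\chi>0\}$ the full symbol of $\mathbf P-\omega$ has nonvanishing imaginary part $-i\chi$, so $\mathbf P-\omega$ is elliptic there. Pick $\widetilde{\chi}\in C^\infty(M)$ with $\supp\widetilde{\chi}\Subset\{\chi>0\}$ and still satisfying \eqref{ap: chi-control}; the elliptic estimate gives $\widetilde{\chi}u\in C^\infty(M)$ outright, with no delicate energy identity, parametrix for multiplication by $\chi$, or $\chi^{1/2}u\in L^2$ step needed. Then apply \eqref{eq: sink-control} with $X_+=\widetilde{\chi}$.

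The second half of your proposal is a genuine error, not just extra work. You write that \eqref{eq: sink-control} ``yields $u\in H^s$ for every $s>-\tfrac12$'' and then say ``it remains to cross the threshold $s=-\tfrac12$ downward, i.e.\ to rule out that $u$ has genuine $H^{-1/2-}$ singularities on $\Lambda_+$.'' But the conclusion $u\in H^s$ for \emph{every} $s>-\tfrac12$ already means $u\in H^{10}$, $u\in H^{100}$, and so on --- the range $s>-\tfrac12$ has no upper bound, so $\bigcap_{s>-1/2}H^s(M)=C^\infty(M)$. There is no ``$H^{-1/2-}$ singularity'' left because $u$ is in $L^2$ and in fact in every positive Sobolev space; nothing needs to be propagated below $-\tfrac12$. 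The entire Fredholm-subtraction/limiting-absorption machinery you sketch, and the invocation of $0\notin\mathrm{Spec}_{\mathrm{pp}}(\mathbf P)$, are not needed here and indeed cannot be used: Lemma~\ref{lem: unique} does not assume $0\notin\mathrm{Spec}_{\mathrm{pp}}(\mathbf P)$ (that hypothesis enters only later, in Lemma~\ref{lem: lap}), and the regularity statement is deliberately a purely microlocal one that applies to every distributional solution --- including any $L^2$ eigenfunctions that may exist. The confusion seems to come from the undamped picture in \cite{force}, where singular profiles on $\Lambda_+$ lie exactly at the $H^{-1/2}$ threshold; but the whole point of the control estimate here is that the damping forbids precisely that, and it does so by bounding $\|u\|_{H^s}$ for all large $s$ at once.
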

\begin{proof}
    Let $\widetilde{\chi}\in C^{\infty}(M)$ satisfy $\supp\widetilde{\chi}\Subset \{\chi>0\}$ and \eqref{ap: chi-control}. Then notice that 
    \[\sigma(\mathbf P-\omega)=p-\omega-i\chi,\]
    which is elliptic on $\supp\widetilde{\chi}$. Hence by the elliptic estimate, for $s, N\in \RR$, there exists $C>0$, such that 
    \begin{equation}
        \|\widetilde{\chi}u\|_{H^s}\leq C\|(\mathbf P-\omega)u\|_{H^s}+ C\|u\|_{H^{-N}}.
    \end{equation}
    This implies $\widetilde{\chi}u\in C^{\infty}(M)$. The desired result $u\in C^{\infty}(M)$ now follows from \eqref{eq: sink-control} with $X_+$ replaced by $\widetilde{\chi}$.
\end{proof}

Now we show the limiting absorption principle:
\begin{lemm}
    \label{lem: lap}
    Suppose $\mathbf P$ is as in Lemma \ref{lem: finite-eig} and $0\notin \mathrm{Spec}_{\mathrm{pp}}(\mathbf P)$.
    Then there exists $\delta>0$ such that for any $\omega\in [-\delta,\delta]$, $f\in C^{\infty}(M)$, the limit 
    \[ (\mathbf P-\omega-i0)^{-1}f:=\lim_{\epsilon\to 0+}(\mathbf P-\omega-i\epsilon)^{-1}f \]
    exists in $C^{\infty}(M)$. Moreover, 
    \begin{equation}
        (\mathbf P-\omega-i0)^{-1}f\in C^{\infty}_{\omega}((-\delta, \delta);C^{\infty}(M))
    \end{equation}
    and is the unique solution to the equation 
    \begin{equation}
        (\mathbf P-\omega)u=f, \ u\in \mathscr D^{\prime}(M).
    \end{equation}
\end{lemm}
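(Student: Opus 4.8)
The plan is to combine the uniform control estimate \eqref{eq: sink-control} (with $X_+$ replaced by the multiplication operator $\chi$, which is legitimate since $T^*(\supp\chi)\setminus 0$ satisfies (CC+) and $q_{-1}=0$, so the threshold is $s>-\tfrac12$) with a contradiction/compactness argument to upgrade it to an estimate \emph{without} the $\chi$-term, and then use the standard reflexivity-and-weak-limit machinery to pass $\epsilon\to 0+$. First I would fix $s>-\tfrac12$ and $N>-s$, and observe that for $\omega=\lambda+i\epsilon$ with $\lambda\in[-\delta,\delta]$, $\epsilon>0$, Theorem \ref{thm: control}(1) gives, uniformly in $\omega$,
\begin{equation}
    \label{eq: lap-step1}
    \|u\|_{H^s}\leq C\|\chi u\|_{H^s}+C\|(\mathbf P-\omega)u\|_{H^{s+1}}+C\|u\|_{H^{-N}}, \qquad u\in C^\infty(M).
\end{equation}
The main obstacle, and the heart of the proof, is to remove the last two (relatively compact) terms, i.e. to show there exist $\delta>0$ and $C>0$ such that for all such $\omega$,
\begin{equation}
    \label{eq: lap-step2}
    \|u\|_{H^s}\leq C\|(\mathbf P-\omega)u\|_{H^{s+1}}, \qquad u\in C^\infty(M).
\end{equation}
I would argue by contradiction: if \eqref{eq: lap-step2} fails, there are $\omega_j=\lambda_j+i\epsilon_j$ with $\lambda_j\to\lambda_\infty\in[-\delta,\delta]$, $\epsilon_j\to\epsilon_\infty\ge 0$, and $u_j\in C^\infty(M)$ with $\|u_j\|_{H^s}=1$ and $\|(\mathbf P-\omega_j)u_j\|_{H^{s+1}}\to 0$. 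By \eqref{eq: lap-step1} and the compact embedding $H^s\hookrightarrow H^{-N}$, after passing to a subsequence $u_j\rightharpoonup u_\infty$ weakly in $H^s$ and $u_j\to u_\infty$ strongly in $H^{-N}$, and $\|\chi u_j\|_{H^s}$ is bounded below away from $0$; combined with ellipticity of $\mathbf P-\omega$ on $\supp\chi$ (as in Lemma \ref{lem: unique}) one gets that the $H^s$-norm of $u_j$ does not escape to high frequency, so in fact $u_j\to u_\infty$ in $H^s$, hence $\|u_\infty\|_{H^s}=1$ and $(\mathbf P-\omega_\infty)u_\infty=0$. If $\epsilon_\infty>0$ this contradicts self-adjointness of $P$ (take imaginary part of $\langle(\mathbf P-\omega_\infty)u_\infty,u_\infty\rangle$ as in Lemma \ref{lem: finite-eig}); if $\epsilon_\infty=0$, then $u_\infty\in\mathrm{Ker}_{L^2}(\mathbf P-\lambda_\infty)$, which by Lemma \ref{lem: finite-eig} can only happen for finitely many $\lambda_\infty\in[-\delta,\delta]$, and by Lemma \ref{lem: unique} $u_\infty\in C^\infty$. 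Shrinking $\delta$ to avoid those finitely many real eigenvalues — which is possible precisely because $0\notin\mathrm{Spec}_{\mathrm{pp}}(\mathbf P)$ — yields the contradiction and establishes \eqref{eq: lap-step2}.

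Granting \eqref{eq: lap-step2}, for each $\omega$ with $\epsilon=\Im\omega>0$ the operator $(\mathbf P-\omega)^{-1}$ exists on $L^2$ (indeed $\Im\langle(\mathbf P-\omega)u,u\rangle=-\langle(\chi+\epsilon)u,u\rangle\le-\epsilon\|u\|^2$ gives invertibility) and maps $C^\infty\to C^\infty$. Set $u_\epsilon:=(\mathbf P-\omega-i\epsilon)^{-1}f$ for $f\in C^\infty(M)$ fixed; by \eqref{eq: lap-step2} applied with the right-hand side $f$ we get $\|u_\epsilon\|_{H^s}\le C\|f\|_{H^{s+1}}$ uniformly for $\lambda\in(-\delta,\delta)$ and small $\epsilon>0$, and this bound holds for \emph{every} $s$ (the threshold $s>-\tfrac12$ plays no role once the right side is smooth, and by a bootstrap using \eqref{eq: sink-control} with higher $s$ and the elliptic regularity from Lemma \ref{lem: unique} one gets uniform bounds in all $H^s$). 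Hence $\{u_\epsilon\}$ is bounded in $C^\infty(M)$; any weak-$*$ subsequential limit $u$ satisfies $(\mathbf P-\lambda)u=f$ in $\mathscr D'(M)$ and lies in $C^\infty(M)$ by Lemma \ref{lem: unique}. Uniqueness of the solution to $(\mathbf P-\lambda)u=f$ in $\mathscr D'(M)$ follows from \eqref{eq: lap-step2} (two solutions differ by a smooth solution of $(\mathbf P-\lambda)w=0$, forcing $w=0$); uniqueness then forces the full family $u_\epsilon$ to converge in $C^\infty(M)$ to $u=:(\mathbf P-\lambda-i0)^{-1}f$, not merely along a subsequence.

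Finally, for the smooth dependence $(\mathbf P-\omega-i0)^{-1}f\in C^\infty_\omega((-\delta,\delta);C^\infty(M))$: formally differentiating $(\mathbf P-\lambda)u(\lambda)=f$ gives $(\mathbf P-\lambda)\partial_\lambda u=u$, so I would define $v_k:=(\mathbf P-\lambda-i0)^{-1}(k!\,u^{(k-1)})$ inductively (each step legitimate by the just-proved solvability, since the right-hand side is smooth), and check via difference quotients that $\partial_\lambda^k u(\lambda)=v_k$: the estimate \eqref{eq: lap-step2}, being uniform in $\lambda\in[-\delta,\delta]$, controls the error in the difference quotient $\frac{u(\lambda+h)-u(\lambda)}{h}-v_1$ because $(\mathbf P-\lambda)$ applied to it equals $\frac{(\mathbf P-(\lambda+h))u(\lambda+h)+h u(\lambda+h)-f}{h}-u(\lambda)+\big((\mathbf P-\lambda)-(\mathbf P-(\lambda+h))\big)\tfrac{u(\lambda+h)}{1}\cdot\tfrac1h$, which simplifies to $u(\lambda+h)-u(\lambda)\to 0$ in $C^\infty$. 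I expect the contradiction argument removing the compact terms to be the only genuinely delicate point; everything downstream is a routine application of \eqref{eq: lap-step2}, Lemma \ref{lem: unique}, and weak-compactness in $C^\infty$.
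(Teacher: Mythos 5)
Your proposal is essentially correct and follows the same core strategy as the paper (control estimate, compactness contradiction using Lemma~\ref{lem: finite-eig} and Lemma~\ref{lem: unique}, then passage to the limit and difference quotients in $\omega$). The structural difference is that you first prove the uniform resolvent inequality $\|u\|_{H^s}\le C\|(\mathbf P-\omega)u\|_{H^{s+1}}$ for \emph{all} $u\in C^\infty$, whereas the paper argues directly on the specific family $u_\epsilon=(\mathbf P-i\epsilon)^{-1}f$: it first absorbs the $\chi$-term via the elliptic estimate \eqref{eq: d-ell} to get \eqref{eq: controlled-hi-reg} with the compact error $\|u_\epsilon\|_{H^{-N}}$ still present, then runs a normalization-and-contradiction argument tailored to that sequence. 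Your version is a bit stronger (an operator estimate rather than a sequence bound), and downstream it lets you treat uniqueness, full-family convergence, and smoothness in $\omega$ somewhat more mechanically; the cost is an additional (but harmless) shrinking of $\delta$, which the paper also performs at the very start of its proof.

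One imprecision to flag: in the contradiction step you assert that $\|\chi u_j\|_{H^s}$ is bounded away from zero and that, via ellipticity on $\supp\chi$, the $H^s$-mass of $u_j$ does not escape to high frequency, so $u_j\to u_\infty$ strongly in $H^s$ and $\|u_\infty\|_{H^s}=1$. Neither the lower bound on $\|\chi u_j\|_{H^s}$ nor strong $H^s$-convergence is actually justified by what you wrote, and fortunately neither is needed. The correct chain is: combine \eqref{eq: lap-step1} with \eqref{eq: d-ell} to absorb the $\chi$-term and obtain $1=\|u_j\|_{H^s}\le C\|(\mathbf P-\omega_j)u_j\|_{H^{s+1}}+C\|u_j\|_{H^{-N}}$; since the first term tends to $0$ this forces $\|u_j\|_{H^{-N}}$ to stay bounded below, and since $u_j\to u_\infty$ \emph{strongly} in $H^{-N}$ by compact embedding, one gets $u_\infty\ne 0$. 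Combined with $(\mathbf P-\omega_\infty)u_\infty=0$ and the regularity upgrade $u_\infty\in C^\infty$ (Lemma~\ref{lem: unique} when $\epsilon_\infty=0$, plain ellipticity when $\epsilon_\infty>0$), this already gives the contradiction; the claim $\|u_\infty\|_{H^s}=1$ is unnecessary. With that sentence repaired, the rest of your argument is sound.
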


\begin{proof}[Proof of Lemma \ref{lem: lap}]
    \noindent
    1. \emph{The limit exists and is smooth on $M$}. By Lemma \ref{lem: finite-eig}, for $\delta>0$ sufficiently small, we have $\mathrm{Spec}_{\mathrm{pp}}(\mathbf P)\cap [-\delta,\delta]=\emptyset$. We first show that for any $\omega\in [-\delta,\delta]$, $f\in C^{\infty}(M)$, we have 
    \[ (\mathbf P-\omega-i0)^{-1}f\in C^{\infty}(M). \]
    For simplicity,  we put $\omega=0$.

    For any $\epsilon>0$, since $\chi\geq 0$, we know $\mathbf P-i\epsilon$ is elliptic. Thus
    $ u_{\epsilon}:=(\mathbf P-i\epsilon)^{-1}f\in C^{\infty}(M)$.
    Let $\widetilde{\chi}$ be as in the proof of Lemma \ref{lem: unique}.
    Theorem \ref{thm: control} implies that for any $s>-\frac12$, $N\in \RR$, there exists $C>0$ such that for any $\epsilon>0$, 
    \begin{equation}
        \label{eq: d-control}
        \|u_{\epsilon}\|_{H^s}\leq C\| \widetilde{\chi} u_{\epsilon} \|_{H^s}+C\|f\|_{H^{s+1}}+C\|u_{\epsilon}\|_{H^{-N}}.
    \end{equation}
    On the other hand, for $\epsilon>0$, we have 
    \[ \Im\sigma(\mathbf P-i\epsilon)=-(\chi+\epsilon), \]
    which is uniformly elliptic over $\supp\widetilde{\chi}$. Thus by the elliptic estimates \cite[Theorem E.33]{dz19}, for any $s$, $N\in \RR$, there exists $C>0$ such that
    \begin{equation}
        \label{eq: d-ell}
        \|\widetilde{\chi} u_{\epsilon}\|_{H^s}\leq C\|f\|_{H^s}+C\|u_{\epsilon}\|_{H^{-N}}.
    \end{equation}
    Estimates \eqref{eq: d-control}, \eqref{eq: d-ell} implies that for any $s>-\frac12$,
    \begin{equation}
        \label{eq: controlled-hi-reg}
        \|u_{\epsilon}\|_{H^s}\leq C\|f\|_{H^{s+1}}+C\|u_{\epsilon}\|_{H^{-N}}.
    \end{equation}
    
    We now show that for any $s>-\frac12$, $\{\|u_{\epsilon}\|_{H^s}\}_{\epsilon>0}$ is bounded. Otherwise, there exists $s>-\frac12$ and a subsequence $u_{\ell}$ such that $\|u_{\ell}\|_{H^s}\to \infty$, $\ell\to \infty$. We put $\widetilde{u}_{\ell}:=u_{\ell}/\|u_{\ell}\|_{H^s}$, then 
    \begin{equation} 
        \label{eq: rescale}
        \|\widetilde{u}_{\ell}\|_{H^s}=1, \ (\mathbf P-i\epsilon)\widetilde{u}_{\epsilon} = f/\|u_{\ell}\|_{H^s} \xrightarrow{C^{\infty}} 0, \ \ell\to\infty. 
    \end{equation}
    By \eqref{eq: controlled-hi-reg}, for any $s^{\prime}>-\frac12$, we have 
    \begin{equation} 
        \label{eq: tilde-smooth}
        \|\widetilde{u}_{\ell}\|_{H^{s^{\prime}}} \leq C \|f\|_{H^{s^{\prime}+1}}/\|u_{\ell}\|_{H^s}+C\|\widetilde{u}_{\ell}\|_{H^s} = C\|f\|_{H^{s^{\prime}+1}}/\|u_{\ell}\|_{H^s} + C. 
    \end{equation}
    In particular, we know $\widetilde{u}_{\ell}$ is bounded in $H^{s^{\prime}}$ for $s^{\prime}>s$. Since the embedding $H^{s^{\prime}}\hookrightarrow H^s$ is compact when $s^{\prime}>s$, by passing to a subsequence, we can assume that $u_{\ell}\to u$ in $H^s$. Let $\ell\to \infty$ in \eqref{eq: rescale}, we find 
    \[ \|u\|_{H^s}=1, \ \mathbf P u=0. \]
    This contradicts Lemma \ref{lem: unique} and the assumption $0\notin \mathrm{Spec}_{\mathrm{pp}}(\mathbf P)$.

    We conclude now that $\{u_{\epsilon}\}_{\epsilon>0}$ is bounded in $H^s$ for any $s>-\frac12$ and hence in $H^s$ for any $s\in \RR$. A similar argument as above shows that $\{u_{\epsilon}\}_{\epsilon>0}$ is precompact in $H^s$ for any $s\in \RR$. Notice that by $(\mathbf P-i\epsilon)u_{\epsilon}=f$ and \eqref{eq: controlled-hi-reg}, every limit point $u$ has to satisfy 
    \begin{equation} 
        \label{eq: limit-sol}
        \mathbf P u=f, \ u\in \mathscr D^{\prime}(M).
    \end{equation}
    By either \eqref{eq: controlled-hi-reg} or Lemma \ref{lem: unique}, we can see that $u\in C^{\infty}$. 
    By Lemma \ref{lem: unique}, we know such $u$ is unique. Hence $u_{\epsilon}$ converges to the unique solution to \eqref{eq: limit-sol}.

    2. \emph{Smoothness in $\omega$}. First, to see that $(\mathbf P-\omega-i0)^{-1}f$ is continuous in $\omega$ for $\omega\in (-\delta,\delta)$, we can replace $u_{\epsilon}=(\mathbf P-i\epsilon)^{-1}f$ above by $u_{\ell}:=(\mathbf P-\omega_{\ell}-i\epsilon_{\ell})^{-1}f$ with 
    \[ \epsilon_{\ell}>0, \ \omega_{\ell}\in (-\delta,\delta), \ \omega_{\ell}+i\epsilon_{\ell}\to \omega\in (-\delta,\delta). \]
    The previous argument shows that $\{u_{\ell}\}$ converges in $H^s$ for any $s$ to the unique solution to 
    \[ u\in C^{\infty}, \ (\mathbf P-\omega)u=f, \]
    which is $(\mathbf P-\omega-i0)^{-1}f$. This implies $(\mathbf P-\omega-i0)^{-1}f$ is continuous in $\omega\in (-\delta,\delta)$.

    For any $k\in \mathbb N$, we denote 
    \[ (\mathbf P-\omega-i0)^{-k}f:=\left((\mathbf P-\omega-i0)^{-1}\right)^k f. \]
    Then we have $(\mathbf P-\omega-i0)^{-k}f\in C^{\infty}$. We claim that for $\omega\in (-\delta,\delta)$,
    \begin{equation}
        \label{eq: omega-deri}
        \partial_{\omega}^k\left( (\mathbf P-\omega-i0)^{-1}f \right)=k!(\mathbf P-\omega-i0)^{-k-1}f, \ k\geq 1.
    \end{equation}
    In fact, for $k=1$, $\omega_0\in (-\delta,\delta)$ we have 
    \[ \frac{(\mathbf P-\omega-i0)^{-1}f-(\mathbf P-\omega_0-i0)^{-1}f}{\omega-\omega_0}=(\mathbf P-\omega-i0)^{-1}(\mathbf P-\omega_0-i0)^{-1}f. \]
    Let $\omega\to \omega_0$ and use the continuity in $\omega$, and we find 
    \[ \partial_{\omega}|_{\omega=\omega_0}(\mathbf P-\omega-i0)^{-1}f=(\mathbf P-\omega_0-i0)^{-2}f. \]
    \eqref{eq: omega-deri} then follows by induction in $k$. This concludes the proof.
\end{proof}

\section{The damped equation}

In this section we study the evolution problem for $\mathbf P=P-i\chi$, which considered as the damping problem for $P$ and prove Theorem \ref{thm: damp}.

\begin{proof}[Proof of Theorem \ref{thm: damp}]
    Since $\mathbf P: L^2(M)\to L^2(M)$ is bounded, we define 
    \begin{equation}
        U(t):=\sum_{\ell=0}^{\infty}\frac{(-i t \mathbf P)^{\ell}}{\ell!}: L^2(M) \to L^2(M), \ t\in \RR.
    \end{equation}
    Then $U(t)$ is also bounded for any $t\in \RR$.

    We first show that \eqref{eq: damp} has a unique solution 
    \begin{equation}
        \label{eq: damp-solution}
        u(t):=(U(t)-I)(\mathbf P-i0)^{-1}f.
    \end{equation}
    Indeed, by Lemma \ref{lem: lap}, we have $(\mathbf P-i0)^{-1}f\in C^{\infty}\subset L^2(M)$. We thus have $u(t)\in L^2(M)$ and can check that
    \[ (i\partial_t - \mathbf P)u(t) = f, \ u(0)=0. \]
    Suppose \eqref{eq: damp} has another solution $w(t)$, then we have 
    \[ (i\partial_t-\mathbf P)(u-w)=0, \ u(0)-w(0)=0. \]
    Now we compute 
    \[\begin{split} 
        0= & 2\Im\langle (i\partial_t-\mathbf P)(u-w), u-w \rangle_{L^2(M)} \\ 
        & = \partial_t \|u-w\|^2_{L^2(M)} +2\langle \chi(u-w), u-w \rangle_{L^2(M)} 
         \geq \partial_t \|u-w\|_{L^2(M)}^2.
    \end{split}\]
    Thus we know 
    \[ \|u(t)-w(t)\|_{L^2(M)}\leq \|u(0)-w(0)\|_{L^2(M)}=0 \ \Rightarrow \ u=w, \ t\geq 0. \]

    To see that $\|u(t)\|_{L^2(M)}$ is uniformly bounded in $t$, we notice that 
    \[ (i\partial_t-\mathbf P)(u(t)+(\mathbf P-i0)^{-1}f)=0. \]
    We again compute 
    \[ 0=\Im\langle (i\partial_t-\mathbf P)(u+(\mathbf P-i0)^{-1}f), u+(\mathbf P-i0)^{-1}f \rangle\geq \partial_t\|u+(\mathbf P-i0)^{-1}f\|_{L^2(M)}^2. \]
    Thus 
    \[ \|u+(\mathbf P-i0)^{-1}f\|_{L^2(M)}\leq \|(\mathbf P-i0)^{-1}f\|_{L^2(M)}, \]
    which implies 
    \[ \|u(t)\|_{L^2(M)}\leq 2\|(\mathbf P-i0)^{-1}f\|_{L^2(M)}. \]
    This concludes the proof of Theorem \ref{thm: damp}.
\end{proof}

\bibliographystyle{alpha}
\bibliography{Robib,Jianbib,HC-bib}

\end{document}